\numberwithin{equation}{section} 
\numberwithin{figure}{section} 
\theoremstyle{plain}
\newtheorem{theorem}{Theorem}
  \theoremstyle{plain}
  \theoremstyle{definition}
  \newtheorem{definition}[theorem]{Definition}
  \theoremstyle{plain}
  \newtheorem{lemma}[theorem]{Lemma}
  \theoremstyle{plain}
  \theoremstyle{remark}
  \newtheorem*{rem*}{Remark}
\newtheorem{assumption}[theorem]{Assumption}
\newcommand{\T}{\mathcal{T}}
\newcommand{\mL}{\mathcal{L}}
\def \ve{\varepsilon}
\def \O{\Omega}
\def\XXint#1#2#3{{\setbox0=\hbox{$#1{#2#3}{\int}$}
\vcenter{\hbox{$#2#3$}}\kern-.87\wd0}}
\def\XXiint#1#2#3{{\setbox0=\hbox{$#1{#2#3}{\int}$}
\vcenter{\hbox{$#2#3$}}\kern-1.05\wd0}}
\def\XXintt#1#2#3{{\setbox0=\hbox{$#1{#2#3}{\int}$}
\vcenter{\hbox{$#2#3$}}\kern-.72\wd0}}
\def\Xinttt#1{\mathchoice
{\XXinttt\displaystyle\textstyle{#1}}%
{\XXinttt\textstyle\scriptstyle{#1}}%
{\XXinttt\scriptstyle\scriptscriptstyle{#1}}%
{\XXinttt\scriptscriptstyle\scriptscriptstyle{#1}}%
\!\int}
\def\XXinttt#1#2#3{{\setbox0=\hbox{$#1{#2#3}{\int}$}
\vcenter{\hbox{$#2#3$}}\kern-.52\wd0}}
\def\XXintttr#1#2#3{{\setbox0=\hbox{$#1{#2#3}{\int}$}
\vcenter{\hbox{$#2#3$}}\kern-.6\wd0}}
\def\XXintttt#1#2#3{{\setbox0=\hbox{$#1{#2#3}{\int}$}
\vcenter{\hbox{$#2#3$}}\kern-.78\wd0}}
\def\sqr#1#2{{\vcenter{\vbox{\hrule height.#2pt\hbox{\vrule width.#2pt height#1pt \kern#1pt\vrule width.#2pt}\hrule height.#2pt}}}}
\def\ddashinttt{\Xinttt-}
\title[Multiscale analysis of  signalling processes]{Multiscale analysis of  signalling processes in  tissues with non-periodic distribution of cells.
}
\author{Mariya Ptashnyk}
\thanks{ Division of Mathematics, University of Dundee, DD1 4HN, Scotland, UK, 
             mptashnyk@maths.dundee.ac.uk \\
              This research was supported by EPSRC First Grant ``Multiscale modelling and analysis of mechanical properties of plant cells and tissues''}           
\begin{document}

\maketitle

\begin{abstract}

In this paper a microscopic model for a signalling process in the cardiac muscle tissue of the left ventricular wall,  comprising non-periodic fibrous microstructure is considered. To derive the macroscopic equations we approximate the non-periodic microstructure by the corresponding locally-periodic microstructure. Then applying  the methods of the  locally-periodic (l-p) unfolding operator,  locally-periodic two-scale (l-t-s)  convergence on oscillating surfaces and  l-p boundary unfolding operator we obtain  the macroscopic problem for a signalling process in the heart muscle tissue. 

\end{abstract}

{\small {\it Key words:} non-periodic microstructures, plywood-like microstructures, signalling processes, domains with  non-periodic perforations, locally-periodic homogenization, unfolding operator
}

\section{Introduction} 

In this paper  we consider the  multiscale analysis of  microscopic problems  posed in domains with non-periodic  microstructures.  We consider a model for a signalling process in  the cardiac muscle tissue of the left ventricular wall, comprising plywood-like structure. The plywood-like structure is given by the superposition of planes of parallel aligned  fibres, gradually rotated with a rotation angle $\gamma$, see Fig.~\ref{Fig1}.  It was observed that  cardiac muscle fibre orientations vary continuously through the left ventricular wall from a negative angle at the epicardium to positive values toward the endocardium \cite{McCulloch,Peskin}.  In the microscopic model we consider the diffusion of signalling molecules in the intercellular space  between muscle fibres and their interaction with  receptors located on the surface of the fibres.  
There two  main  difficulties  in the multiscale  analysis of a microscopic problem posed in a domain with  non-periodic perforations:    (i) the approximation of the non-periodic microstructure by a locally-periodic  and (ii) derivation of  limit equations for the non-linear equations define on oscillating surfaces of the microstructure.  Thus, as first we define the locally-periodic  microstructure  which approximates the original non-periodic microstructure.  Then, applying techniques of locally-periodic homogenization (locally-periodic (l-p) two-scale convergence and l-p unfolding operator) to the locally-periodic approximation we derive macroscopic equations for the original  microscopic model.  The l-p two-scale convergence on oscillating surfaces and l-p boundary unfolding operator allow us to pass to the limit in the non-linear equations define on  surfaces of the locally-periodic microstructure.   In this paper we consider a simplest model describing interactions between processes defined  in a perforated domains and the dynamics on  surfaces of the microstructure.  However the techniques presented here can be applied also to more general microscopic models as well as to other non-periodic microstructures. 

Pervious results on homogenization in locally periodic  media constitute  the  multiscale analysis of a heat-conductivity problem defined in  domains with non-periodicaly distributed  spherical  balls  \cite{Alexandre,Briane3,Shkoller}, and elliptic and Stokes equations  in  non-periodic fibrous materials   \cite{Briane1,Briane2,Mikelic,Ptashnyk13}.  
 Formal asymptotic expansion  and two-scale convergence defined for periodic test functions, \cite{Nguetseng},  were used  to derive macroscopic equations for  models posed in  domains with  locally periodic perforations, i.e.\  domains consisting of  periodic cells with  smoothly changing perforations    \cite{Chechkin1,Chechkin,Mascarenhas2,Mascarenhas1,Mascarenhas3,AdrianTycho2}.

The paper is organized as follows. In Section \ref{model}   the microscopic model for a signalling process in a tissue  with non-periodic plywood-like microstructure is formulated. In Section~\ref{existence} we prove  the existence and uniqueness results for the microscopic model and derive {\it a priori} estimates for solutions of the microscopic model.  The approximation of  the microscopic equations  posed in the domain with non-periodic microstructure by a corresponding problem defined in a domain with  locally-periodic microstructure is given in Section~\ref{Application}.  Then, applying the l-p unfolding operator, l-t-s convergence on oscillating surfaces, and l-p boundary unfolding operator we derive  the macroscopic model for a signalling process in  the heart muscle tissue.  In Appendix we summarise the definitions and main compactness results for l-t-s convergence and l-p unfol\-ding operator. 


\begin{figure}
\includegraphics[width=10 cm]{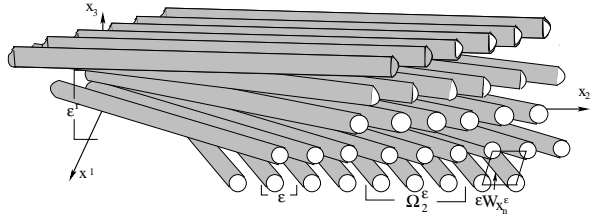}
\caption{Schematic representation of  a plywood-like structures. }\label{Fig1}
\end{figure}

\section{Microscopic model for a signaling process in a tissue  with non-periodic distribution of cells.}\label{model} 

We consider a signalling processes in a tissue with non-periodic distribution of cells.
As an example of a non-periodic microstructure of a cell tissue we consider the plywood-like structure of the cardiac muscle tissue of the left ventricular wall,  with gradually rotating layers of the height $\ve$ of fibres aligned in the same direction.   

We consider an open, bounded subdomain $\Omega\subset \mathbb R^3$ representing a part of a tissue.
   Similarly to \cite{Ptashnyk13},    we define  the non-periodic distribution of rotating  fibres    by considering  a rotation 
matrix $R$.  For  a given function $\gamma \in C^2(\mathbb R)$,  with   $0\leq \gamma(x) \leq \pi$ for  $x\in \mathbb R$,  we define   the rotation  matrix   around the  $x_3$-axis  with rotation angle $\gamma(x)$ with the $x_1$-axis as
\[
R(\gamma(x))\hspace{-0.05 cm}=\hspace{-0.1 cm} \left(\hspace{-0.1 cm}
\begin{array}{ccc}
 \phantom{-}\cos(\gamma(x)) &- \sin(\gamma(x))& 0\\
\sin(\gamma(x)) &\phantom{-}\ \cos(\gamma(x))& 0\\
 0 & 0 & 1
\end{array}
\right).
\]
 and  the  characteristic function of a fibre of radius  $\rho(x)a$ is given by 
 \[\vartheta(x, y)= \begin{cases}       1, \quad |\hat y|\leq \rho(x) a,\\         0, \quad |\hat y|>\rho(x) a,       \end{cases}
\]
where $\hat y = (y_2, y_3)$, $\rho \in C^1(\overline \Omega)$ and  $\rho(x)a\leq 2/5$, with $0< \rho_0 \leq \rho(x) \leq  \rho_1 < \infty$  for all $x\in \overline\Omega$.

For $k\in \mathbb Z^3$ we define   $ x_{k}^\ve=R_{x_k^\ve} \ve k$ with $R_{x_k^\ve}:= R(\gamma(x_{k,3}^\ve))$. Notice  that  $x_{k,3}^{\ve} =\ve k_3$ and  the third variable is invariant under the rotation $R_{x_k^\ve}$.  This ensure that for each fixed $\ve k_3$  we   obtain a layer of parallel aligned fibres. 
Then the characteristic function of fibres in the non-periodic plywood-like microstructure reads
\begin{equation}\label{elastnonper}
\chi_{\Omega_f^\ve} (x)
=\chi_{\Omega}(x)\sum\limits_{k\in \mathbb Z^3}  \vartheta \big(x_k^\ve, R^{-1}_{k}( x- x_{k}^\ve)/ \ve\big) 
\end{equation}
 and the inter-cellular space in the tissue  is characterized by  
\begin{eqnarray*}
\chi_{\Omega^\ast_\ve} = (1 - \chi_{\Omega^\ve_f})\chi_\Omega,
\end{eqnarray*}
We define the non-periodic perforated domain $\Omega_\ve^\ast$ as
$$
\Omega^\ast_\ve = \Omega \setminus \Omega_\ve^0, \quad \text{ with } \quad  \Omega_\ve^0 =  \bigcup_{k \in \Xi_\ve} (\ve R_{x_k^\ve} K_{x_k^\ve} Y_0+ x_k^\ve)
=  \bigcup_{k \in \Xi_\ve} \ve R_{x_k^\ve} (K_{x_k^\ve}Y_0+ k)
$$
where $\Xi_\ve = \{k  \in \mathbb Z^3:  \ve R_{x_k^\ve} (Y_1+ k) \subset  \Omega\}$,  $Y_{x_k^\ve} = R_{x_k^\ve} Y_1$, and 
 $Y^\ast_{x_k^\ve, K} = R_{x_k^\ve} (Y_1\setminus K_{x_k^\ve}Y_0)$, with 
$$ Y_1= \left(-\frac12,\frac12\right)^3\quad \text{ and } \quad Y_0=\{ y\in \mathbb R^3 :\;  |\hat y| \leq a \}. $$ 
The assumptions on $\rho$ and $a$ ensure that  $K_xY_0 \subset Y_1$  for all $x\in \overline \Omega$.  Here 
  $$K_x = K(x) \quad \text{ and } \quad  K(x) = \begin{pmatrix} 
 1\; & 0&0\\
 0\; & \rho(x) & 0 \\
 0\; & 0& \rho(x) \end{pmatrix}.$$

Notice that since $R$ is a rotation matrix and  $K_xY_0 \subset Y_1$  for all $x\in \overline \Omega$ we have that  $(\ve R_{x_k^\ve} K_{x_k^\ve} \overline Y_0+ x_k^\ve)\cap (\ve R_{x_m^\ve} K_{x_m^\ve} \overline Y_0+ x_m^\ve) = \emptyset$ for any $m, n \in \mathbb Z^3$ with $n \ne m$, and 
$\Omega_\ve^\ast$ is connected. 

 The surfaces of cells, i.e.\ boundaries of the microstructure,   are denoted  by  
 $$
 \Gamma^\ve =  \sum_{k \in \Xi_\ve}  \left(\ve R_{x_{k}^\ve}  K_{x_k^\ve}\Gamma + x_{k}^\ve\right)= \sum_{k \in I^\ve}  \ve R_{x_{k}^\ve}  
 (K_{x_k^\ve}\Gamma + k) , 
  $$
 where  $\Gamma = \partial Y_0$.
  
Notice that   the changes in the microstructure of $\Omega^\ast_{\ve}$ are defined by changes in the periodicity given by a linear transformation (rotation) $R(x)$ and by changes in the shape of the microstructure (changes in the radius of fibres) given by $K(x)$,  for $x \in \Omega$.

To define the non-constant  reaction rates  for binding and dissociation processes on  cell membranes   we consider  $\alpha, \beta \in C^1(\overline \Omega;  C^1_0(Y_1))$, extended in $y$-variable by zero to $\mathbb R^3$,  and define
\begin{equation*}
\begin{aligned}
 &\alpha^\ve(x)= \sum_{k \in \Xi_\ve} \alpha(x, R^{-1}_{x_k^\ve}(x-x_k^\ve)/\ve)\chi_{(\ve Y_{x_k^\ve} + x_{k}^\ve)}(x), \\
 &\beta^\ve(x)= \sum_{k \in \Xi_\ve}\beta(x, R^{-1}_{x_k^\ve}(x-x_k^\ve)/\ve)\chi_{(\ve Y_{x_k^\ve} + x_{k}^\ve)}(x).
 \end{aligned}
 \end{equation*}
 
 In the microscopic model for a  signalling  process in a cell tissue we  consider the diffusion of  signaling molecules $c^\ve$ in the  inter-cellular space and their interaction with  free and bound receptors $r^\ve_f$ and $r^\ve_b$ located on  surfaces of cells. Then the microscopic problem reads
\begin{equation}\label{micro_model_1}
\begin{aligned}
\partial_t c^\ve - \text{div} (A \nabla c^\ve) &= F^\ve(x,c^\ve)& \; & \text{ in  } \, (0,T)\times \Omega^{\ast}_{\ve}, \\
 -A \nabla c^\ve \cdot \bf{n} &  =\ve \big[  \alpha^\ve(x) c^\ve r_f^\ve - \beta^\ve(x) r_b^\ve \big]&\; &  \text{ on } \, (0,T)\times\Gamma^\ve,\\ 
  A \nabla c^\ve \cdot \bf{n}  &= 0 &\; & \text{ on } \, (0,T)\times (\partial \Omega\cap \partial\Omega^{\ast}_{\ve}),\\
 c^\ve(0,x) &= c_0(x)&  &  \text{ in } \, \Omega^{\ast}_{\ve},  
\end{aligned}
\end{equation}
where the dynamics  in  the concentrations of free and bound receptors on   cell surfaces is determined by two ordinary differential equations
\begin{equation}\label{micro_model_2}
\begin{aligned}
& \partial_t r^\ve_f  = p^\ve(x, r_b^\ve) -  \alpha^\ve(x) c^\ve r_f^\ve + \beta^\ve(x) r_b^\ve -d^\ve_f(x) r_f^\ve  \quad  && \text{ on } \, (0,T)\times  \Gamma^\ve,\\
& \partial_t r^\ve_b  = \phantom{ p^\ve(r_f^\ve) - }  \; \alpha^\ve(x) c^\ve r_f^\ve - \beta^\ve(x) r_b^\ve - d^\ve_b(x) r_b^\ve  \quad  && \text{ on } \, (0,T)\times \Gamma^\ve, \\
& r_f^\ve(0, x) = r_{f0}^\ve(x), \quad \qquad r_b^\ve(0, x) = r_{b0}^\ve(x) \qquad && \text{ on } \, \Gamma^\ve, 
\end{aligned}
\end{equation}
where 
$$r_{j0}^\ve(x)=
r_{j0}^1(x) \sum_{k \in \Xi_\ve}  r_{j0}^2(R_{x_k^\ve}^{-1} (x- x_k^\ve)/\ve) \chi_{(\ve Y_{x_k^\ve} + x_k^\ve)}(x), \quad \; j=f,b. $$  
 For simplicity of the presentation we shall assume that the diffusion coefficient $A$ and the decay rates $d_f^\ve$, $d_b^\ve$ are constant. We also  assume that the functions $F^\ve(x, c^\ve)=F(c^\ve)$ and $p^\ve(x, r^\ve_f)= p(r^\ve_f)$  are independent of $x\in \Omega$.  The dependence of $A$, $d_j$, $F$ and $p$ on the microscopic and macroscopic variables can be analysed in the same way as for $\alpha^\ve$ and $\beta^\ve$. 
 

\begin{assumption} \label{asumption} 
\begin{itemize}
\item $(A\xi, \xi) >A_0 |\xi|^2$ for $\xi \in \mathbb R^3$, $A_0>0$, \;  $d_j\geq 0$, \,  $j=f,b$. 
\item $ F:\mathbb R\to \mathbb R$ Lipschitz continuous,  $ F(\xi_{-})\xi_{-} \leq C |\xi_{-}|^2$, where $\xi_{-}=\min\{ 0, \xi\}$.
\item $p: \mathbb R \to \mathbb R$  Lipschitz continuous  and  $p(\xi)$ is nonnegative for nonnegative $\xi$.
\item  $ \alpha,  \beta  \in  C^1(\overline \Omega; C_0^1(Y_1))$ are nonnegative.
\item  $c_0 \in H^1(\Omega)\cap L^\infty(\Omega)$,  $r^1_{j0} \in C^1(\overline\Omega)$,  and $r^2_{j0} \in C^1_0(Y)$ extended by zero to $\mathbb R^3$,  and $c_0$, $r^l_{j0}$ are nonnegative, for $j=f,b$ and $l=1,2$.
\end{itemize}
\end{assumption}

We shall use the following notations  $\Omega_{\ve, T}^\ast =  (0,T)\times \Omega_\ve^\ast $ $\Gamma^\ve_T =   (0,T)\times \Gamma^\ve$, $\Omega_T=(0,T)\times  \Omega$, $\Gamma_T= (0,T)\times \Gamma$, and $\Gamma_{x,T}= (0,T)\times \Gamma_x$.
 For $u \in L^q(0,\tau; L^p(G))$ and $v \in L^{q^\prime}(0,\tau; L^{p^\prime} (G))$ we  denote by
$
\langle u,v \rangle_{G_\tau} = \int_0^\tau\int_G u \, v \, dx dt. 
$
\begin{definition} 
A weak solution of the microscopic problem  \eqref{micro_model_1}--\eqref{micro_model_2} are   functions
$c^\ve, r^\ve_f, r^\ve_b$ such that 
\begin{equation*}
\begin{aligned}
& c^\ve \in L^2(0,T; H^1(\Omega^\ast_{\ve})), \; c^\ve \in  H^1(0,T; L^2(\Omega_\ve^\ast)), \\
& r_j^\ve \in H^1(0,T; L^2(\Gamma^\ve)),\;  r_j^\ve \in L^\infty(\Gamma^\ve_T), \; \; \;  j = f,b,
\end{aligned}
\end{equation*}
 satisfying the equation \eqref{micro_model_1} in the weak form 
\begin{equation}\label{sol_weak_l}
\langle \partial_t c^\ve, \phi  \rangle_{\Omega_{\ve, T}^\ast}  +  \langle A\, \nabla c^\ve, \nabla \phi   \rangle_{\Omega_{\ve, T}^\ast} = \langle F(c^\ve), \phi   \rangle_{\Omega_{\ve,T}^\ast}   
+ \ve\,   \langle \beta^\ve r^\ve_b - \alpha^\ve c^\ve r^\ve_f, \phi \rangle_{\Gamma^\ve_T},  
\end{equation}
for all $\phi\in L^2(0,T; H^1(\Omega_{\ve}^\ast))$,  the equations \eqref{micro_model_2} are satisfied  a.e.\  on $\Gamma^\ve_T$,  and
$c^\ve \to c_0$ in $L^2(\Omega_\ve^\ast)$, $r_{j}^\ve \to r_{j0}^\ve$ in $L^2(\Gamma^\ve)$ as $t \to 0$.  
\end{definition} 

\section{Existence and uniqueness result and a priori estimates for a weak solution of the microscopic problem.}\label{existence}
In a similar way as in  \cite{Ptashnyk2013,Ptashnyk08,Ptashnyk15} we can proof the existence and uniqueness results and  \textit{a priori} estimates for  a weak solution of the problem \eqref{micro_model_1}--\eqref{micro_model_2}.
\begin{lemma} \label{apriori}
Under Assumption \ref{asumption}  there exists a unique  non-negative weak solution  of the microscopic problem \eqref{micro_model_1}--\eqref{micro_model_2}  satisfying the a priori estimates 
\begin{eqnarray}\label{receptor_a_priori}
\begin{aligned}
\|c^\ve\|_{L^\infty(0,T; L^2(\Omega_{\ve}^\ast))} + \|\nabla c^\ve\|_{L^2(\Omega_{\ve, T}^\ast)} + \|\partial_t c^\ve\|_{L^2(\Omega_{\ve,T}^\ast)} +  \ve^{\frac 12} \|c^\ve\|_{L^2(\Gamma^\ve_T)} \leq \mu,\; \; \\
\|r^\ve_f\|_{L^\infty(\Gamma^\ve_T)} + \|r^\ve_b\|_{L^\infty(\Gamma^\ve_T)} +
\ve^{\frac 12}\|\partial_t r^\ve_f\|_{L^2(\Gamma^\ve_T)} + \ve^{\frac 12}\|\partial_t r^\ve_b\|_{L^2(\Gamma^\ve_T)} \leq \mu,\; \; 
\end{aligned}
\end{eqnarray}
where the constant $\mu$ is independent of $\ve$ and 
\begin{eqnarray}\label{c_uniform}
\|(c^\ve-M_1 e^{M_2 t})^+\|_{L^\infty(0,T; L^2(\Omega_{\ve}^\ast))} + \|\nabla (c^\ve-M_1 e^{M_2 t})^+\|_{L^2(\Omega_{\ve, T}^\ast)} \leq \mu\ve,
\end{eqnarray}
where $M_1 \geq \|c_0\|_{L^\infty(\Omega)}$, $M_1M_2 \geq  |F(0)|+ |F(1)| +\mu \|\beta \|_{L^\infty(\Omega\times Y_1)} \|r_b^\ve\|_{L^\infty(\Gamma^\ve_T)}$, and $\mu$ is independent of $\ve$.
\end{lemma}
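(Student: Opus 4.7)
The plan is to establish the four assertions in sequence: existence, uniqueness, non-negativity, and the two families of estimates. Throughout, the key analytical tool is the $\ve$-scaled trace inequality $\ve \|v\|_{L^2(\Gamma^\ve)}^2 \leq C(\|v\|_{L^2(\Omega_\ve^\ast)}^2 + \ve^2 \|\nabla v\|_{L^2(\Omega_\ve^\ast)}^2)$ for $v\in H^1(\Omega_\ve^\ast)$, which compensates for the fact that $|\Gamma^\ve| \sim \ve^{-1}$.

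Existence and uniqueness would be obtained by a Banach fixed-point argument that exploits the decoupled structure of the system: given a candidate $\hat c^\ve$, the surface system for $(r_f^\ve, r_b^\ve)$ is an ODE with Lipschitz right-hand side and so admits a unique global solution depending Lipschitz-continuously on $\hat c^\ve|_{\Gamma^\ve_T}$; substituting back yields a linear parabolic problem for $c^\ve$ with Robin-type inhomogeneous data, solvable by Galerkin approximation. The composition defines a contraction on $L^2(0,\tau;L^2(\Omega_\ve^\ast))$ for $\tau$ small, which Banach's theorem resolves, and the a priori estimates below allow extension to $(0,T)$. Non-negativity of $c^\ve$ follows by testing \eqref{sol_weak_l} with $(c^\ve)_{-}$: the hypothesis $F(\xi_{-})\xi_{-} \leq C|\xi_{-}|^2$ together with non-negativity of $r_j^\ve$, $\alpha^\ve$, and $\beta^\ve$ yields via Gronwall $(c^\ve)_{-}\equiv 0$. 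Non-negativity of $r_j^\ve$ follows from invariance of the non-negative quadrant for the ODE: at $r_f^\ve = 0$ the right-hand side reduces to $p(r_b^\ve) + \beta^\ve r_b^\ve \geq 0$, and at $r_b^\ve = 0$ to $\alpha^\ve c^\ve r_f^\ve \geq 0$.

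For the estimates \eqref{receptor_a_priori} I would proceed in a specific order. First, summing the two equations in \eqref{micro_model_2} and using Lipschitz continuity of $p$ gives $\partial_t(r_f^\ve + r_b^\ve) \leq |p(0)| + L_p(r_f^\ve + r_b^\ve)$; Gronwall combined with the uniform $L^\infty$ bound on the initial data produces the $L^\infty(\Gamma^\ve_T)$ control of the receptors \emph{independently} of $c^\ve$. Next I would test \eqref{sol_weak_l} with $c^\ve$: Young's inequality together with the scaled trace inequality absorbs the boundary term into $A_0\|\nabla c^\ve\|^2_{L^2}$ plus a lower-order $\|c^\ve\|^2_{L^2}$ contribution, and Gronwall yields the $L^\infty L^2$ and $L^2 H^1$ bounds on $c^\ve$ and the $\ve^{1/2}$-scaled surface bound. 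Testing with $\partial_t c^\ve$ after a Galerkin or Steklov regularisation produces the $\partial_t c^\ve$ bound, while multiplying the ODEs by $\partial_t r_j^\ve$ and integrating over $\Gamma^\ve$ yields the $\ve^{1/2}$-scaled bound on $\partial_t r_j^\ve$.

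For \eqref{c_uniform} the strategy is a super-solution comparison: set $w = (c^\ve - M_1 e^{M_2 t})^+$, so that $w(0) = 0$ since $M_1 \geq \|c_0\|_\infty$. Testing the equation satisfied by $c^\ve - M_1 e^{M_2 t}$ with $w$, the Lipschitz continuity of $F$ and the choice of $M_1M_2$ (which exploits the already-established $L^\infty$ bound on $r_b^\ve$) ensure that the bulk integrand is controlled by $Lw$ modulo absorbable constants; the boundary contribution splits into the favourable $-\ve\langle \alpha^\ve c^\ve r_f^\ve,w\rangle_{\Gamma^\ve}\leq 0$ and a $\beta^\ve r_b^\ve$ piece, which Cauchy--Schwarz and the scaled trace inequality bound by $\ve\int_{\Gamma^\ve}\beta^\ve r_b^\ve\, w\, dS \leq C(\|w\|_{L^2} + \ve\|\nabla w\|_{L^2})$. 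Young's inequality then absorbs the gradient into $A_0\|\nabla w\|^2_{L^2}$, leaves a residual of order $\ve^2$, and Gronwall closes the bound at $\mu\ve$. The main obstacle is making every estimate uniform in $\ve$ given $|\Gamma^\ve|\sim\ve^{-1}$; the critical device is the careful sequencing (receptor $L^\infty$ from the ODE sum first, then bulk estimates on $c^\ve$, and only at the end the comparison bound \eqref{c_uniform}) to avoid circular dependence, combined with disciplined bookkeeping of the powers of $\ve$ in each invocation of the scaled trace inequality.
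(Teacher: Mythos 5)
Your overall architecture coincides with the paper's: the receptor $L^\infty$ bounds come first by summing the two ODEs in \eqref{micro_model_2}, then the energy estimate for $c^\ve$ via the $\ve$-scaled trace inequality \eqref{trace_estim-G_ve}, and finally the comparison argument with $(c^\ve-M_1e^{M_2 t})^+$ for \eqref{c_uniform}, where the linear-in-$w$ boundary contribution is absorbed by the $M_1M_2e^{M_2t}$ term (this is exactly why $\|r_b^\ve\|_{L^\infty}$ enters the condition on $M_1M_2$) and the gradient piece by Young's inequality at cost $O(\ve^2)$. These parts, and the non-negativity arguments, are consistent with the paper.

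There is, however, a genuine gap in your derivation of the $\|\partial_t c^\ve\|_{L^2(\Omega^\ast_{\ve,T})}$ bound. You assert that testing with $\partial_t c^\ve$ after a Galerkin or Steklov regularisation ``produces the bound'', but the boundary term $\ve\langle \alpha^\ve c^\ve r_f^\ve,\partial_t c^\ve\rangle_{\Gamma^\ve}$ cannot be handled by Cauchy--Schwarz plus the scaled trace inequality: that route requires controlling $\ve^{1/2}\|\partial_t c^\ve\|_{L^2(\Gamma^\ve)}$, which by \eqref{trace_estim-G_ve} costs $\ve\|\nabla\partial_t c^\ve\|_{L^2(\Omega_\ve^\ast)}$ --- a quantity for which you have no estimate. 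The paper's device is to write $c^\ve\partial_t c^\ve=\tfrac12\partial_t|c^\ve|^2$, integrate by parts in time so the derivative falls on $\alpha^\ve r_f^\ve$, and then substitute the ODE for $\partial_t r_f^\ve$; the non-negativity of $\alpha^\ve$, $r_f^\ve$, $c^\ve$ makes the resulting terms either sign-favourable or controllable by quantities already estimated. This also reverses the ordering you propose: the surface ODE (and hence the $\partial_t r_j^\ve$ information) must be exploited before, not after, the $\partial_t c^\ve$ estimate. A second, smaller point concerns uniqueness: your short-time contraction on $L^2(0,\tau;L^2(\Omega_\ve^\ast))$ is not obviously a contraction, since the difference of the bilinear boundary terms produces $\alpha^\ve c_2(r_{f,1}-r_{f,2})$ and the Gronwall factor for the ODE difference involves $\exp(\int\alpha^\ve \hat c\,dt)$; the paper closes uniqueness only after first establishing an $L^\infty$ bound on $c^\ve$ for each fixed $\ve$ (via Theorem II.6.1 of Ladyzhenskaja--Solonnikov--Uralceva), and your argument needs an ingredient of this kind as well.
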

\begin{proof}[Sketch]
As in \cite{Ptashnyk15} the existence of a solution of the microscopic 
 problem  \eqref{micro_model_1}--\eqref{micro_model_2} for each fixed  $\ve>0$ 
 is obtained by applying fixed point arguments.  To derive {\it a priori} estimate we consider the structure of the microscopic equations. 
  For non-negative solutions, by adding the equations for $r_f^\ve$ and $r_b^\ve$, we obtain
\begin{equation*}
\partial_t ( r_{f}^\ve+ r_{b}^\ve) =  p(r_{b}^\ve) -  d_{b}  r_{b}^\ve  -d_f r_{f}^\ve.
\end{equation*}
Then the Lipschitz continuity  of $p$, the non-negativity of $r_j^\ve$, and the boundedness of $d_j$, with $j=f,b$,  imply the boundedness of $r_{f}^\ve$ and $r_{b}^\ve$ on $\Gamma^\ve_T$.
Using $K_x Y_0 \subset Y_1 $ for all $x\in\overline \Omega$ and the uniform bounds for $K$, i.e.\  $0<\rho^2_0 \leq |\det K(x)|\leq \rho^2_1 < \infty$,  we obtain  the trace estimate
\begin{eqnarray*}
\| \phi \|_{L^p(K_{x_k^\ve}\Gamma)} \leq C \big[ \| \phi \|_{L^p(Y_1\setminus K_{x_k^\ve} Y_0)} + \| \nabla_y \phi \|_{L^p(Y_1\setminus K_{x_k^\ve} Y_0)}\big], 
\end{eqnarray*}
where the constant $C$ depends on $Y_1$, $Y_0$,  $K$ and is independent of $\ve$ and $k$. 
Considering the change of variables $x= \ve R_{x_k^\ve}  y +  x_k^\ve$ and summing up over $k\in \Xi_\ve$ yield for $\phi \in W^{1,p}(\Omega^\ast_\ve)$, with $p \in [1,\infty)$,
\begin{equation}\label{trace_estim-G_ve}
\ve \| \phi \|^p_{L^p(\Gamma^\ve)} \leq \mu \big[\| \phi \|^p_{L^p(\Omega_\ve^\ast)} + \ve^p   \|\nabla \phi \|^p_{L^p(\Omega_\ve^\ast)} \big],
\end{equation}
where the constant $\mu$ depends on $Y_1$, $Y_0$, $R$ and $K$ and is independent of $\ve$. 

 Taking $c^\ve$  as a test function in \eqref{sol_weak_l}  and using the trace estimate \eqref{trace_estim-G_ve} we obtain the  estimates for $c^\ve$.  
Testing the equations \eqref{micro_model_2} by $\partial_t r_{f}^\ve$  and  $\partial_t r_{b}^\ve$, respectively, yields the  estimates for the time derivatives of $r_j^\ve$, $j=f,b$.  
 In the derivation of the \textit{a priori} estimate for $\partial_t c^\ve$, we use the equation for $\partial_t r_f^\ve$  to estimate the non-linear term on the boundary $\Gamma^\ve$:  
$$
\begin{aligned}
-\int_{\Gamma^\ve} \alpha^\ve \, r^\ve_f \, c^\ve \partial_t c^\ve d\gamma_x& = 
-\frac 12 \frac{d}{dt}\int_{\Gamma^\ve} \alpha^\ve \, r^\ve_f \, |c^\ve |^2  d\gamma_x 
\\
&+ \frac 12  \int_{\Gamma^\ve} \alpha^\ve ( p(r_b^\ve) - \alpha^\ve r_f^\ve c^\ve - \beta^\ve r_b^\ve - d_f r_f^\ve) |c^\ve|^2 d\gamma_x 
\\
&\leq  \frac 12  \int_{\Gamma^\ve} \alpha^\ve  p(r_b^\ve) |c^\ve|^2 d\gamma_x - \frac 12 \frac{d}{dt}\int_{\Gamma^\ve} \alpha^\ve \, r^\ve_f \, |c^\ve |^2  d\gamma_x.
\end{aligned}
$$
Considering $(c^\ve - M_1 e^{M_2 t})^{+}$ as a text function in \eqref{sol_weak_l} we obtain 
\begin{equation*}
\begin{aligned}
 \int_{\Omega_\ve^\ast} |(c^\ve(\tau) - M_1e^{M_2 \tau})^+|^2 dt
+
\int_0^\tau \int_{\Omega_\ve^\ast} M_1 M_2e^{M_2 t} (c^\ve - M_1e^{M_2 t})^+ dx dt 
 \\
+ 
\int_0^\tau \int_{\Omega_\ve^\ast}\big[ |\nabla (c^\ve - M_1 e^{M_2 t})^+|^2 + c^\ve (c^\ve - M_1 e^{M_2 t})^+ \big]  dx dt \\
\ve \int_0^\tau \int_{\Gamma^\ve} \alpha^\ve(x) r^\ve_f (c^\ve-M_1 e^{M_2 t})^+ d\sigma_x dt  \\
\leq \mu \int_0^\tau \Big[\int_{\Omega_\ve^\ast}  F(c^\ve)  (c^\ve - M_1e^{M_2 t})^+ dx  + 
 \ve  \int_{\Gamma^\ve} \beta^\ve r_b^\ve  (c^\ve - M_1e^{M_2 t})^+ d\sigma_x \Big]dt.
\end{aligned}
\end{equation*}
Using the non-negativity and boundedness  of $\beta^\ve$ and  $r_f^\ve$, along with the trace estimate  \eqref{trace_estim-G_ve},  the last integral we can be estimated as
\begin{equation*}
\begin{aligned}
\ve \int_0^\tau \int_{\Gamma^\ve} \beta^\ve r_b^\ve  (c^\ve - M_1e^{M_2 t})^+ d\sigma_x dt
 \leq \mu_1   \int_0^\tau \int_{\Omega_\ve^\ast} (c^\ve - M_1e^{M_2 t})^+ dx dt\\ + 
\ve \mu_2   \int_0^\tau \int_{\Omega_\ve^\ast} |\nabla (c^\ve - M_1e^{M_2 t})^+| dx dt 
\leq 
\mu_1   \int_0^\tau \int_{\Omega_\ve^\ast} (c^\ve - M_1e^{M_2 t})^+ dx dt \\+ 
\mu_2 \delta   \int_0^\tau \int_{\Omega_\ve^\ast} |\nabla (c^\ve - M_1e^{M_2 t})^+|^2 dx dt  + \mu_\delta \ve^2  
\end{aligned}
\end{equation*}
for any $\delta>0$, where  the constants $\mu_1$, $\mu_2$ and $\mu_\delta$ depend on $\|\beta\|_{L^\infty(\Omega\times Y_1)}$, $\|r_b^\ve\|_{L^\infty(\Gamma^\ve_T)}$ and   on the transformation matrices $R$ and $K$, but  are independent of $\ve$.
Using Lipschitz continuity of $F$ and applying the Gronwall inequality yield estimate \eqref{c_uniform}. 

 To show the uniqueness of a solution of the microscopic problem \eqref{micro_model_1}--\eqref{micro_model_2} we considering  the equations for  the difference of two solutions. Especially,  the non-negativity of $\alpha^\ve$, $r^\ve_f$ and  $c^\ve$ along with the boundedness of $r^\ve_f$  ensures 
 \begin{eqnarray*}
 \|r^\ve_{f,1}(\tau) - r^\ve_{f,2}(\tau)\|^2_{L^2(\Gamma^\ve)} \leq  \mu \int_0^\tau  \sum_{j=f,b}\|r^\ve_{j,1} - r^\ve_{j,2}\|^2_{L^2(\Gamma^\ve)} +  \|c^\ve_{1} - c^\ve_{2}\|^2_{L^2(\Gamma^\ve)}  dt.
 \end{eqnarray*}
 Testing  the sum of the equations for $r^\ve_{f,1} -r^\ve_{f,2}$ and   $r^\ve_{b,1}- r^\ve_{b,2}$   by  
 $r^\ve_{f,1} + r^\ve_{b,1}- r^\ve_{f,2} - r^\ve_{b,2}$ and using the estimate from above  yield 
  \begin{eqnarray*}
 \|r^\ve_{b,1}(\tau) - r^\ve_{b,2}(\tau)\|^2_{L^2(\Gamma^\ve)} \leq 
   \|r^\ve_{b,1}(\tau) + r^\ve_{f,1}(\tau)- r^\ve_{b,2}(\tau) - r^\ve_{f,2}(\tau)\|^2_{L^2(\Gamma^\ve)} \\
 +  \|r^\ve_{f,1}(\tau) - r^\ve_{f,2}(\tau)\|^2_{L^2(\Gamma^\ve)}  \\
\leq  \mu_1\int_0^\tau  \sum_{j=f,b} \|r^\ve_{j,1} - r^\ve_{j,2}\|^2_{L^2(\Gamma^\ve)}  dt
+\mu_2\int_0^\tau  \|c^\ve_{1} - c^\ve_{2}\|^2_{L^2(\Gamma^\ve)}   dt.
 \end{eqnarray*}
Combining last two inequalities and applying the Gronwall inequality imply the estimates 
for $\|r^\ve_{j,1}(\tau) - r^\ve_{j,2}(\tau)\|^2_{L^2(\Gamma^\ve)}$, with $\tau \in (0, T]$ and $j=f,b$, in terms of 
$\|c^\ve_{1} - c^\ve_{2}\|^2_{L^2(\Gamma^\ve_T)}$. 
 Considering $(c^\ve-S)^{+}$ as a test function in \eqref{sol_weak_l},  using the boundedness of $r^\ve_j$, and  applying  Theorem II.6.1 in \cite{Ladyzhenskaja} yield the  boundedness of $c^\ve$ for every fixed $\ve$. 
Then considering \eqref{sol_weak_l} for $c^\ve_1$ and $c^\ve_2$ we obtain   the estimate for $\|c_1^\ve - c_2^\ve\|_{L^2(\Omega^\ast_{\ve, T})}$ and  $\ve^{1/2}\|c_1^\ve - c_2^\ve\|_{L^2(\Gamma^\ve_{T})}$ in terms of $\ve^{1/2}\|r^\ve_{j,1} - r^\ve_{j,2}\|_{L^2(\Gamma^\ve_T)}$, with $j=f,b$.  Hence, using the estimates for  $\|r^\ve_{j,1} - r^\ve_{j,2}\|_{L^2(\Gamma^\ve)}$ we obtain that $c^\ve_1= c^\ve_2$ a.e.\ in $\Omega^\ast_{\ve, T} $ and $r^\ve_{j,1} = r^\ve_{j,2}$ a.e. in $\Gamma^\ve_T$, where $j = f,b$. 
 \end{proof}
 
The assumptions on the microstructure of the non-periodic domain and the regularity of the transformation matrices $R$ and $K$ ensure the following  extension result.
\begin{lemma}\label{extension_local}
For $x_k^\ve \in \Omega$,   and $u \in W^{1,p}( Y^\ast_{x_k^\ve, K})$, with $p\in (1, \infty)$,  there exists an extension  $\tilde u \in  W^{1,p}(Y_{x_k^\ve})$ from $Y^\ast_{x_k^\ve, K}$ to $Y_{x_k^\ve}$ such that 
\begin{eqnarray}\label{extension_local_1}
\|\tilde u\|_{L^p(Y_{x_k^\ve})} \leq \mu \|u\|_{L^p( Y_{x_k^\ve,K}^\ast)}, \qquad \|\nabla \tilde u\|_{L^p(Y_{x_k^\ve})} \leq \mu \|\nabla u\|_{L^p(Y_{x_k^\ve,K}^\ast)}\; ,
\end{eqnarray}
where $\mu$ depends on $Y_1$, $Y_0$, $R$ and $K$ and is independent of $\ve$ and $k \in \Xi_\ve$. 
For $u \in W^{1,p}(\Omega^\ast_{\ve})$  we have an extension $\tilde u \in W^{1,p}(\Omega)$ from $\Omega_{\ve}^\ast$ to $\Omega $ such that 
\begin{eqnarray}\label{extension_local_2}
\|\tilde u\|_{L^p(\Omega)} \leq \mu \|u\|_{L^p(\Omega^\ast_{\ve})}, 
\qquad \|\nabla \tilde u\|_{L^p(\Omega)} \leq \mu \|\nabla u\|_{L^p(\Omega^\ast_{\ve})}\; ,
\end{eqnarray}
where  $\mu$ depends on $Y_1$, $Y_0$, $D$ and $K$ and is independent of $\ve$.
\end{lemma}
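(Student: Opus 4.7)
The argument splits into a local extension on one rotated, unit-size cell and a global gluing step. For the local part, I first remove the rotation: since $R_{x_k^\ve}$ is orthogonal with $|\det R_{x_k^\ve}|=1$, the pull-back $v(y):=u(R_{x_k^\ve}y)$ lies in $W^{1,p}(Y_1\setminus K_{x_k^\ve}Y_0)$ with norms equal to those of $u$ on $Y^\ast_{x_k^\ve,K}$, so it suffices to construct, for every admissible value of $K=K(x)$, an extension operator $P_K:W^{1,p}(Y_1\setminus KY_0)\to W^{1,p}(Y_1)$ whose operator norm is bounded uniformly in $x\in\overline\Omega$.

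For any fixed admissible $K$, the hole $KY_0=\{|\hat y|\leq\rho(x)a\}\cap Y_1$ is a right cylinder with smooth lateral boundary, compactly contained in the interior of $Y_1$ because $\rho(x)a\leq 2/5<1/2$. The perforated domain $Y_1\setminus KY_0$ therefore satisfies a uniform cone condition, and a bounded $W^{1,p}$-extension operator exists by Stein's theorem (or, more elementarily, by reflection across the smooth cylindrical boundary). To obtain a bound on $\|P_K\|$ independent of $K$, I would fix a reference matrix $K^\sharp$ and choose a $C^1$-family of diffeomorphisms $\Phi_K:Y_1\to Y_1$, equal to the identity in a neighbourhood of $\partial Y_1$ and mapping $KY_0$ onto $K^\sharp Y_0$ by a radial stretch in the $(y_2,y_3)$-plane. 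Since $\rho$ is $C^1$ on the compact set $\overline\Omega$, the family $\{\Phi_K\}$ and their Jacobians have uniformly bounded $C^1$-norms; setting $P_K:=(\Phi_K)^\ast\circ P^\sharp\circ(\Phi_K^{-1})^\ast$ then transfers a single reference extension $P^\sharp$ to $Y_1\setminus KY_0$ with operator norm depending only on $Y_1$, $Y_0$, $R$ and $K$. This proves \eqref{extension_local_1}.

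For the global estimate \eqref{extension_local_2}, I would use that the cells $\ve R_{x_k^\ve}(Y_1+k)$ are pairwise disjoint: within each horizontal slab $\ve k_3\leq x_3<\ve(k_3+1)$ all cells share the same planar rotation $R(\gamma(\ve k_3))$, so they tile the slab, and distinct slabs are themselves disjoint because rotations about the $x_3$-axis preserve $x_3$. On every interior cell $k\in\Xi_\ve$ I perform the change of variables $y=R_{x_k^\ve}^{-1}(x-x_k^\ve)/\ve$, apply the local extension from the previous step, and scale back; the powers of $\ve$ produced by the rescaling of the $L^p$ and gradient norms are identical on both sides of \eqref{extension_local_1}, so they cancel and yield the analogous bound on $\ve R_{x_k^\ve}Y_1+x_k^\ve$ with the same constant. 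On the residual region $\Omega\setminus\bigcup_{k\in\Xi_\ve}(\ve R_{x_k^\ve}Y_1+x_k^\ve)$ there is no perforation by the definition of $\Xi_\ve$, so I simply set $\tilde u=u$ there. Since each cell boundary lies in $\Omega^\ast_\ve$, where $\tilde u=u$, the locally extended pieces agree on interfaces and glue to an element of $W^{1,p}(\Omega)$; summing \eqref{extension_local_1} over $k\in\Xi_\ve$ and adding the boundary-layer contribution delivers \eqref{extension_local_2}.

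The main technical point is the uniformity of the constant in \eqref{extension_local_1} as the hole geometry varies with $k$: if $\rho$ were not $C^1$ and bounded away from zero and from the threshold $2/(5a)$, the extension operator could degenerate. All the remaining steps, once this uniformity is in hand, are routine because the cells tile exactly and the perforation is supported strictly inside $\Omega$.
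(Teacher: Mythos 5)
Your overall architecture is the same as the paper's: construct a uniformly bounded extension on a single rotated reference cell, remove the rotation by orthogonality, rescale by $\ve$ (the powers of $\ve$ indeed cancel because \eqref{extension_local_1} compares like norms with like), sum over $k\in\Xi_\ve$, and observe that no extension is needed near $\partial\Omega$ since only cells with $\ve R_{x_k^\ve}(Y_1+k)\subset\Omega$ are perforated. Your way of obtaining uniformity of the local constant --- transporting a fixed reference extension $P^\sharp$ by a $C^1$-family of diffeomorphisms $\Phi_K$ controlled by the $C^1$-bounds of $\rho$ --- is a legitimate concrete realization of the paper's appeal to the Lipschitz continuity of $K$ and $R$ and the two-sided bounds on $|\det K|$.

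There is, however, a genuine geometric gap. The perforation $Y_0=\{y:|\hat y|\le a\}$ with $\hat y=(y_2,y_3)$ is a cylinder unbounded in the $y_1$-direction: this is a \emph{fibre}, and the condition $\rho(x)a\le 2/5$ only gives compact containment of its cross-section in the square $(-1/2,1/2)^2$ of the $(y_2,y_3)$-variables. The fibre exits each cell through the faces $y_1=\pm 1/2$ (this is essential to the plywood model; cf.\ Lemma \ref{Differ_Charact}, which speaks of the \emph{length} of fibres). Consequently: (i) your claim that $KY_0\cap Y_1$ is compactly contained in the interior of $Y_1$ is false, and $\Phi_K$ cannot be the identity in a neighbourhood of all of $\partial Y_1$ (though it can be chosen independent of $y_1$ and equal to the identity near the lateral faces); and, more seriously, (ii) the gluing step fails as stated, because the interface between two cells adjacent along the fibre axis meets the fibre in a disk that does \emph{not} lie in $\Omega^\ast_\ve$, so the two cell-by-cell extensions need not have matching traces there and the glued function need not belong to $W^{1,p}$. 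The standard repair for fibrous perforations is to extend \emph{slice-wise}: apply a fixed two-dimensional extension operator in the cross-sectional variables $(y_2,y_3)$ for a.e.\ $y_1$; such an operator commutes with $\partial_{y_1}$, so it controls the full gradient, and it produces identical traces on the faces transverse to the fibre whenever the radius is constant along the fibre. Since here $\rho(x_k^\ve)$ varies from cell to cell along a fibre, one must additionally either extend over entire fibre segments at once or use the Lipschitz continuity of $\rho$ to reconcile the cross-sections of adjacent cells; without one of these ingredients the proof is incomplete.
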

\begin{proof}[Sketch]
The proof follows the same lines as in the periodic case \cite{Jaeger}. 
The  only difference is that  the extension  depends on the Lipschitz continuity of $K$ and $R$ and the uniform boundedness from above and below of $|\det K(x)|$ and $|\det R(x)|$ for all $x\in \overline \Omega$.
To show \eqref{extension_local_2}  we consider first  the extension  from $R_{x_k^\ve}(k +\hat Y^\ast_{1,K_{x_k^\ve}})$ into $R_{x_k^\ve}(k +Y_1)$, where $\hat Y_{1,K_{x_k^\ve}} ^\ast= Y_1 \setminus K_{x_k^\ve} Y_0$,  and obtain the estimates in \eqref{extension_local_1}. Then scaling by $\ve$ and summing up over  $k \in  \Xi_\ve$  imply  \eqref{extension_local_2}. 
Notice  that in the definition of $\Omega_\ve^\ast$ we consider only those $R_{x_k^\ve}(k +Y_0)$ that 
$R_{x_k^\ve}(k +Y_1)\subset \Omega$, and  hence near $\partial \Omega$ we need to extend only in the directions parallel to $\partial \Omega$.  In  general we  would obtain  a local extension to a subdomain $\Omega^\delta= \{ x\in \Omega: \text{dist}(x, \partial \Omega) >\delta\}$ for any fixed $\delta>0$.
\end{proof}
 
 \section{Derivation of macroscopic equations.}\label{Application}
To derive the macroscopic equations for the microscopic problem  posed in a domain with non-periodic microstructure  we shall approximate it by a  locally-periodic  problem and apply the methods of  locally-periodic two-scale convergence and l-p unfolding operator, see Appendix for the definitions and convergence results for l-t-s convergence  and l-p unfolding operator. 
 

To define the locally-periodic microscopic structure related to the original non-periodic one,   we consider,  similarly to \cite{Briane3,Ptashnyk13}, the partition covering of  $\Omega$ by a family of  open non-intersecting cubes $\{\Omega_n^\ve\}_{1\leq n\leq  N_\ve}$ of side $\varepsilon^r$, with $0<r<1$, such that 
 \begin{equation*}
  \Omega\subset \bigcup\limits_{n=1}^{N_\ve} \overline\Omega_n^\ve \quad \text{and } \quad 
 \Omega_n^\ve \cap \Omega \neq \emptyset.
 \end{equation*}

For each $x \in \mathbb R^3$ we  consider a  transformation matrix   $D(x)\in \mathbb R^{3\times 3}$ and assume that  
$D, D^{-1} \in \text{Lip}(\mathbb R^3; \mathbb R^{3\times 3})$ and  $0<D_1\leq |\det D(x)| \leq D_2<\infty$ for all $x\in \overline \Omega$. 
  The   matrix $D$ will be defined by the rotation matrix $R$  and its derivatives and the specific formula of $D$ will be given  later.

 The locally-periodic microstructure is defined by considering  for $x_n^\ve, \tilde x_n^\ve  \in \Omega_n^\ve$, arbitrary chosen fixed points,  $n=1,\ldots, N_\ve$,   a covering of $\Omega_n^\ve$ by parallelepipeds $D_{x_n^\ve}Y$ 
$$
\Omega_n^\ve \subset \tilde x_n^\ve + \bigcup_{\xi \in \Xi_n^\ve} \ve D_{x_n^\ve}(\overline Y+ \xi), \;    \text{ where }  
  \Xi_n^\ve= \{ \xi \in  \mathbb Z^3 :  \ve D_{x_n^\ve}( Y+ \xi) \cap \Omega_n^\ve \neq \emptyset \, \} ,
$$
with $Y=(0,1)^3$,  $D_x:=D(x)$, $D_{x_n^\ve} = D(x_n^\ve)$, and $1\leq n \leq N_\ve$.
 
The  perforated domain with locally-periodic microstructure is given by  
 $$\widetilde \Omega^\ast_{\ve} =\text{Int}\big( \bigcup_{n=1}^{N_\ve}  \Omega_{n}^{\ast, \ve}\big) \cap \Omega, \; \; \text{ with} \;  \Omega_{n}^{\ast, \ve}= \Big(\tilde x_n^\ve + \bigcup_{\xi \in \Xi_n^\ve}\ve D_{x_n^\ve}( \overline{\hat Y_{K_{x_n^\ve}}^\ast} + \xi) \Big)\cap \overline \Omega_n^\ve, $$
 where  $\hat Y_{K_{x_n^\ve}}^\ast = Y \setminus \bigcup\limits_{k \in \{0,1\}^3}(\widetilde K_{x_n^\ve} \overline Y_0+ k)$,  with   $\widetilde K_{x_n^\ve} = \widetilde K(x_n^\ve)$, for $n=1,\ldots, N_\ve$, where the transformation matrix $\widetilde K$ will be specified later.
 We shall  also denote 
$$
\hat \Omega_n^\ve = \tilde x_n^\ve+\text{Int} \Big( \bigcup_{\xi \in \hat \Xi_n^\ve} \ve D_{x_n^\ve}(\overline{Y}+ \xi)\Big),
\qquad  \Lambda_\ve^\ast = \widetilde \Omega_\ve^\ast \setminus \bigcup_{n=1}^{N_\ve} \hat \Omega_n^\ve,$$ 
where $
\hat \Xi^\ve_n =\{\xi \in \Xi_n^\ve \; : \,  \, \ve D_{x_n^\ve}(Y+ \xi) \subset (\Omega_n^\ve \cap \Omega)\}. $
The boundaries of the locally-periodic microstructure are defined as  
$$\widetilde \Gamma^\ve =\bigcup_{n=1}^{N_\ve} \Gamma_n^\ve\cap \Omega, \; \; \text{ where } \; \;  \Gamma_n^\ve= \Big(\tilde x_n^\ve + \bigcup_{\xi\in \Xi_n^\ve} \ve  D_{x_n^\ve} (\tilde \Gamma_{x_n^\ve,K} + \xi)\Big)\cap \Omega_n^\ve, $$
and 
$$\hat \Gamma^\ve = \bigcup_{n=1}^{N_\ve}\Big( \tilde x_n^\ve + \bigcup_{\xi\in \hat \Xi_n^\ve} \ve  D_{x_n^\ve} (\widetilde \Gamma_{x_n^\ve,K} + \xi)\Big),$$
where $\widetilde \Gamma_{x_n^\ve,K}= \widetilde K_{x_n^\ve} \Gamma$ and $\Gamma=\partial Y_0$. For the problem analysed here we shall consider  $\tilde x_n^\ve =  x_n^\ve$.


The following calculations illustrate the motivation for the  locally-periodic approximation and determine  formulas for the transformation matrices $D$ and $\widetilde K$.  For  $n=1,\ldots, N_\ve$  we choose such   $\kappa_n \in \mathbb Z^3$   that for $x_{n}^\ve=R_{\kappa_{n}}\ve \kappa_n $ we have $x_{n}^\ve \in \Omega_n^\ve$.  
 In the definition of covering of $\Omega_n^\ve$ by shifted parallelepipeds we consider a numbering of $\xi \in \Xi_n^\ve$ 
 and write   
 $$\Omega_n^\ve\subset x_{n}^\ve +\bigcup_{j=1}^{I_n^\ve} \ve D_{x_{n}^\ve}(Y+\xi_j) \quad  \; \;  \text{ for } 
\; \;  \xi_j\in  \Xi_n^\ve. $$
 Then    for    $1\leq j \leq I_n^\ve$  we consider   $k_j^n = \kappa_n + \xi_j$ and   $x_{k_j^n}^\ve= R_{k_{j}^{n}}\ve k_j^n$.  
 
 Using the  regularity assumptions on the function $\gamma$, determining  the macroscopic changes of the rotation angle,    and considering  the Taylor expansion for $R^{-1}$ around  $x_{\kappa_n}^\ve$, i.e.\ around $\ve\kappa_{n,3}$,  we obtain 
\begin{eqnarray}\label{Taylor_Approx}
\begin{aligned}
 &R^{-1}_{k_{j}^{n}}(x- x_{k_j^n}^\ve) =R^{-1}_{k_{j}^{n}} x- \ve k_j^n    = R^{-1}_{\kappa_{n}}x\\
 &  +  (R_{\kappa_n}^{-1})^{\prime} x_{n}^\ve  \xi_{j,3}\ve +(R_{\kappa_n}^{-1})^\prime(x- x_{n}^\ve) 
 \xi_{j,3}\ve+ b(|\xi_{j,3} \ve|^2)x -  \ve(\kappa_n+ \xi_j) \\ &
= R_{\kappa_{n}}^{-1}(x  -x_{n}^\ve)- \widetilde W_{x_{n}^\ve}\xi_j\ve +(R_{\kappa_{n}}^{-1})^\prime(x- x_{n}^\ve)  \xi_{j,3}\ve+ b(|\xi_{j,3} \ve|^2)x,  
\end{aligned}
\end{eqnarray}
where $\widetilde W_{x_{n}^\ve}= \widetilde W(x_{n}^\ve)$ with $\widetilde W(x)=(I- \nabla R^{-1}(\gamma(x_3)) x)$.  The notation of the gradient is understood as  $\nabla R^{-1}(\gamma(x))x=\nabla_z(R^{-1}(\gamma(z))x)|_{z=x}$. Thus for  $x\in \Omega_n^\ve$ the distance  between  
  $R^{-1}_{\kappa_n} (x -x_n^\ve)- \widetilde W_{x_n^\ve} \xi_j\ve$ and   $R^{-1}_{k_{j}^{n}}(x-  x_{k_j^n}^\ve)$ is of  the order $\sup\limits_{1\leq j\leq I_n^\ve} |\xi_j\ve|^2 \sim \ve^{2r}$.
  
This calculations together with the estimates below  will show  that the non-periodic plywood-like microstructure can by approximated by locally-periodic one,  comprising  $\widetilde Y_{x_n^\ve}$-periodic structure  in each $\Omega_n^\ve$ of side $\ve^r$, $n=1, \ldots, N_\ve$, with an appropriately  chosen  $r\in (0,1)$. 

 Here   $\widetilde Y_x= D(x)Y$ and $\widetilde \Gamma_x= D(x) \widetilde K(x) \Gamma= R_x K(x) \Gamma$,  with  $R_x = R(\gamma(x_3))$,  $D(x)=R_xW(x)$,  $\widetilde K(x) = W^{-1}(x) K(x)$, and 
\begin{equation}\label{def_W}
W(x) =  \begin{pmatrix}
1 \;& 0 & 0\\
 0&1\; & w(x)\\
 0&0&1
\end{pmatrix}, 
\end{equation}
 where  \,  $w(x)=  \gamma^\prime(x_3)(\cos (\gamma(x_3))x_1+\sin(\gamma(x_3)) x_2)$. 
 
The definitions of   $R$, $W$ and $\gamma$  ensure   that   the transformation matrices  $D$ and $\widetilde K$ are  Lipschitz continuous and $0 < d_0 \leq |\det{D(x)} | \leq d_1 < \infty$, $0 < \rho_0 \leq |\det \widetilde K(x) | \leq \rho_1 <0$, for all $x\in \overline \Omega$. 
Since $\vartheta$ is independent of the first variable,   we consider  in $W(x)$ the  shift only for the second variable.
Notice that if the microscopic structure would be locally-periodic, then  the matrix $R$ would be constant in each $\Omega_n^\ve$ and we would obtain  $D(x)=R_x$.

In  the estimates for the approximation of the non-periodic problem by locally-periodic we shall use Lemma, proven in \cite{Briane1}, facilitating the estimate for the difference
between the values of the characteristic function at two different points.
\begin{lemma}[\cite{Briane1}]\label{Differ_Charact}
For the characteristic function of  a fibre system yields
\begin{equation*}
 ||\vartheta_r(x+\tau) - \vartheta_r(x)||^2_{L^2(\Omega)} \leq  C r L |\tau|,
\end{equation*}
where $L$ is the length  and $r$ is the radius of fibres.
\end{lemma}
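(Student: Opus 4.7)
The plan is to reduce the $L^2$ estimate to the Lebesgue measure of a symmetric difference and then exploit the cylindrical geometry of each fibre. First, since $\vartheta_r$ takes values in $\{0,1\}$, the pointwise squared difference $|\vartheta_r(x+\tau)-\vartheta_r(x)|^2$ equals the indicator of the set where the translate of the fibre system disagrees with the original, so
\[
\|\vartheta_r(\cdot+\tau)-\vartheta_r\|_{L^2(\Omega)}^2 \;=\; \bigl|\Omega \cap \bigl((\mathrm{supp}\,\vartheta_r)\,\triangle\,(\mathrm{supp}\,\vartheta_r-\tau)\bigr)\bigr|.
\]
Writing the support as the disjoint union of individual fibres $F_i$ and using the elementary inclusion $(\cup_i F_i)\triangle(\cup_i F_i -\tau)\subset \cup_i (F_i\triangle(F_i-\tau))$, it suffices to bound $|F_i\triangle(F_i-\tau)|$ for a single cylindrical fibre and sum.

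Next, for a straight cylinder $F_i$ of radius $r$ and length $L$, the symmetric difference $F_i\triangle(F_i-\tau)$ is contained in a tubular $|\tau|$-neighbourhood of $\partial F_i$. Since the lateral surface area is $2\pi r L$ and the two end caps contribute $2\pi r^2$, a standard tube-formula computation (or a direct integration in cylindrical coordinates decomposing $\tau$ into axial and transverse components) yields
\[
|F_i\triangle(F_i-\tau)| \;\leq\; C\,(rL + r^2)\,|\tau| \;\leq\; C\, r\, L\, |\tau|,
\]
where in the last step we absorb $r^2$ into $rL$ using the geometric assumption that fibres are long compared to their radius (i.e.\ $r\lesssim L$, which holds in the present setting where $r\sim \varepsilon$ while $L$ is of order one).

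Finally, summing over the $N$ fibres whose supports meet $\Omega$ and noting that $N$ is uniformly bounded by the volume of $\Omega$ divided by a fixed fraction of the volume per fibre gives the stated inequality with a constant $C$ depending on $\Omega$ and on the fibre-system geometry but independent of $\tau$. The main delicate point is step three: verifying the tube-type bound for the symmetric difference of a cylinder with its translate, which requires separating the shift $\tau$ into its axial and transverse components and controlling the contribution of the lateral surface uniformly, rather than using a cruder $|\partial F_i|\cdot|\tau|$ bound that would also be correct but needs justification near the end caps.
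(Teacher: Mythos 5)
The paper does not prove this lemma at all: it is quoted verbatim from Briane's thesis \cite{Briane1}, so there is no in-paper argument to compare against. Your reduction is the right one and, as far as the single-fibre estimate goes, it is correct and complete: since $\vartheta_r$ is an indicator, the squared $L^2$ norm is the measure of the symmetric difference $F\triangle(F-\tau)$; that set lies in the $|\tau|$-neighbourhood of $\partial F$ (any point of the symmetric difference is joined to its translate by a segment crossing $\partial F$); and splitting $\tau$ into axial and transverse parts gives $|F\triangle(F-\tau)|\leq C\big(rL\,|\tau_\perp|+r^2|\tau_\parallel|\big)\leq C(rL+r^2)|\tau|\leq CrL|\tau|$ once $r\lesssim L$, which holds in the paper's application (radius $\sim\ve$, length $\sim\ve^{r}$ with $r<1$). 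This is precisely the surface-area-times-shift bound the statement encodes, and it is the estimate the paper actually invokes, fibre by fibre, inside the double sum defining $\mathcal I_2$.

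The one genuine flaw is your final summation step. If the fibres occupy a fixed volume fraction of $\Omega$, their number is $N\sim|\Omega|/(\pi r^2L)$, so summing the per-fibre bounds gives $N\cdot CrL|\tau|\sim C|\Omega|\,|\tau|/r$, which blows up as $r\to0$ and is \emph{not} $\leq CrL|\tau|$; your assertion that the sum "gives the stated inequality" does not follow. The resolution is that the lemma must be read as a per-fibre (or boundedly-many-fibres) estimate — the dimensional form $rL|\tau|=(\text{lateral area})\times(\text{shift})$ already forces this — and that is exactly how the paper uses it: the estimate of $\mathcal I_2$ applies the bound to each $j\in J_n^\ve$ separately and then counts fibres via $|J_n^\ve|\leq C\ve^{3(r-1)}$ and $N_\ve\leq C\ve^{-3r}$. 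So you should delete the global summation claim; with the lemma understood per fibre, your steps one through three constitute a correct proof.
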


 We obtain  the  following  macroscopic equations for the microscopic~problem~\eqref{micro_model_1}--\eqref{micro_model_2}.
\begin{theorem}
A sequence  of solutions of the microscopic model \eqref{micro_model_1}--\eqref{micro_model_2} converges to a solution
$$c \in L^2(0,T; H^1(\Omega)) \cap H^1(0,T; L^2(\Omega)) \; \text{  and } \; r_j \in H^1(0,T; L^2(\Omega; L^2(\Gamma_x))) $$
 of the macroscopic equations  
  \begin{eqnarray*}
 \begin{aligned}
&\theta(x)  \partial_t c - \text{div} ( \mathcal A(x) \nabla c) =  \theta(x)  F(c) +\frac 1 {|\widetilde Y_x|} \int_{\widetilde \Gamma_x}\hspace{-0.15 cm } \big[ \widetilde \beta(x,y)  r_b -\widetilde \alpha(x,y)  r_f  c \big] d\sigma_y, \\
& \mathcal A(x) \nabla c \cdot \textbf{n} =0 \; \hspace{ 6 cm }  \text{ on } \partial\Omega\times (0,T), \\
  &\partial_t r_f = p(r_b) - \widetilde \alpha(x,y)\, r_f \, c + \widetilde \beta(x, y) \, r_b - d_f\, r_f , \\
 &  \partial_t r_b=\phantom{ p(r_f) +}  \widetilde \alpha(x,y) \,  r_f \, c - \widetilde \beta(x,y) \, r_b - d_b \, r_b ,
 \end{aligned}
 \end{eqnarray*}
 for  $(t,x)\in  (0,T)\times \Omega$ and  $y \in  \widetilde \Gamma_{x}$, where 
  the macroscopic diffusion coefficient $\mathcal A$ is defined as 
 $$
 \mathcal A_{ij}(x) = \frac 1{|\widetilde Y_x|}\int_{\widetilde Y^\ast_{x,K}} (A_{ij} + A_{ik} \partial_{y_k} w^j(x,y) ) dy, 
 $$
 with   $w^j$, for $j=1,2,3$, are solutions of the unit cell problems 
 \begin{eqnarray}\label{unit_cell}
 \begin{aligned}
& \text{div} (A(\nabla_y w^j + e_j)) = 0 \quad && \text{ in } \widetilde Y^\ast_{x,K}, \quad \\
 & A(\nabla_y w^j + e_j)\cdot \textbf{n} =0 \quad && \text{ on } \widetilde \Gamma_x, \quad w^j \; \; \widetilde Y_x-\text{periodic}.
 \end{aligned}
 \end{eqnarray}
 Here 
 $$
 \begin{aligned}
&   \widetilde Y_{x,K}^\ast= D_x\big(Y \setminus \bigcup_{k \in \{ 0,1\}^3} (\widetilde K_x \overline Y_0+ k)\big),\quad &&  \widetilde Y_x= D_x Y,   \\
& \widetilde \Gamma_x = \bigcup_{k\in \{0,1\}^3}  D_x(\widetilde K_x\Gamma+  k) \cap \widetilde Y_x, \quad  && \theta(x) = \dfrac{|\widetilde Y^\ast_{x,K}|}{|\widetilde Y_x|}, 
 \end{aligned}
  $$ where  $Y= (0,1)^3$,   $R_x= R(\gamma(x_3))$, $D_x= R_x W_x$,  $\widetilde K_x = W_x^{-1} K_x$,    with   $W_x=W(x)$  defined by \eqref{def_W},      and 
 \begin{equation}\label{tilde_b_a}
 \widetilde \alpha(x,y)= \sum_{k\in \mathbb Z^3} \alpha(x, R^{-1}_x(y - D_x k)), \; \; \; \widetilde \beta(x,y)=  \sum_{k\in \mathbb Z^3} \beta(x, R^{-1}_x(y - D_x k)). 
 \end{equation}
%
\end{theorem}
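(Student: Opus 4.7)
The plan is to proceed in four stages: (i) uniform extension and extraction of weakly convergent subsequences, (ii) approximation of the non-periodic problem by an auxiliary locally-periodic problem posed on $\widetilde{\Omega}^\ast_\ve$, (iii) application of the l-p unfolding operator and l-t-s convergence on oscillating surfaces to pass to the two-scale limit, and (iv) identification of the cell problems and the macroscopic system.

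First, I would combine the a priori estimates of Lemma~\ref{apriori} with the extension lemma (Lemma~\ref{extension_local}) to obtain uniformly bounded sequences $\tilde c^\ve \in L^2(0,T;H^1(\Omega))\cap H^1(0,T;L^2(\Omega))$. Up to a subsequence, the Aubin--Lions lemma yields strong convergence $\tilde c^\ve \to c$ in $L^2(\Omega_T)$ and weak convergence in $L^2(0,T;H^1(\Omega))$; the a priori estimate \eqref{c_uniform} additionally gives a uniform $L^\infty$-bound on $c^\ve$ on the support of the limit. For $r^\ve_f, r^\ve_b$, the uniform $L^\infty(\Gamma^\ve_T)$ and $\ve^{1/2}\partial_t$ bounds are precisely what is needed for the l-p boundary unfolding operator to extract strong two-scale limits $r_f, r_b \in H^1(0,T;L^2(\Omega;L^2(\Gamma_x)))$.

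The key approximation step is to compare $(c^\ve, r^\ve_f, r^\ve_b)$ with the analogous solution $(\tilde c^\ve, \tilde r^\ve_f, \tilde r^\ve_b)$ of the same PDE posed on the locally-periodic domain $\widetilde{\Omega}^\ast_\ve$ and surface $\widetilde{\Gamma}^\ve$. Using the Taylor expansion \eqref{Taylor_Approx}, the characteristic functions of fibres centered at $x^\ve_{k_j^n}$ and at their locally-periodic replacements differ by a translation of order $\ve^{2r}$ per fibre. Applying Lemma~\ref{Differ_Charact} with $L\sim 1$, $r\sim \ve$, $|\tau|\sim \ve^{2r}$ and summing over the $O(\ve^{-3})$ fibres in $\Omega$ (equivalently, $O(\ve^{3-3r})$ layers of side $\ve^r$) yields $\|\chi_{\Omega^\ast_\ve}-\chi_{\widetilde{\Omega}^\ast_\ve}\|_{L^2(\Omega)}^2 \le C \ve^{2r-1}$, which tends to zero for $r\in(1/2,1)$. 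An analogous estimate on the surface measures, together with testing the PDE for the difference $c^\ve-\tilde c^\ve$ by itself and the ODEs by their time derivatives, gives $\|c^\ve-\tilde c^\ve\|_{L^2(\Omega^\ast_\ve\cap\widetilde{\Omega}^\ast_\ve\,;\,T)} + \ve^{1/2}\|r^\ve_j - \tilde r^\ve_j\|_{L^2(\widetilde{\Gamma}^\ve_T)} \to 0$, so the two problems share the same homogenized limit.

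Having reduced to the locally-periodic setting, I would apply the l-p unfolding operator $\mathcal{T}^\ve$ and the l-p boundary unfolding operator (summarised in the Appendix) to the weak formulation on $\widetilde{\Omega}^\ast_\ve$. The compactness results give $\mathcal{T}^\ve(\tilde c^\ve)\to c$ and $\mathcal{T}^\ve(\nabla \tilde c^\ve)\to \nabla_x c + \nabla_y c^1$ with $c^1\in L^2(\Omega_T; H^1_{\mathrm{per}}(\widetilde{Y}_x)/\IR)$, while the boundary unfolding produces strong two-scale traces of $\tilde r^\ve_f, \tilde r^\ve_b$ on $\widetilde{\Gamma}_x$. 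Choosing test functions of the form $\phi(t,x)+\ve\phi_1(t,x,R^{-1}_{x^\ve_n}(x-x^\ve_n)/\ve)$ with $\phi_1$ locally $\widetilde{Y}_x$-periodic, passing to the limit separates into a macroscopic equation for $c$ and, after standard variational arguments, a cell problem for $c^1$ whose solution has the representation $c^1 = \sum_j \partial_{x_j} c \, w^j$; this identifies $\mathcal A$ as in \eqref{unit_cell}. The periodisation \eqref{tilde_b_a} of $\alpha,\beta$ arises because the l-p two-scale limit of $\alpha^\ve, \beta^\ve$ on the oscillating surface is the $\widetilde{Y}_x$-periodic extension in $y$ of $\alpha(x,R^{-1}_x y), \beta(x,R^{-1}_x y)$ restricted to $\widetilde{\Gamma}_x$.

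The main obstacle is the passage to the limit in the nonlinear surface terms $\alpha^\ve c^\ve r^\ve_f$ and $\beta^\ve r^\ve_b$: the convergence of $c^\ve$ in the bulk must be upgraded to strong two-scale convergence of its trace on $\widetilde{\Gamma}^\ve$. This is achieved via the trace estimate \eqref{trace_estim-G_ve} combined with the strong convergence of $\mathcal{T}^\ve(\tilde c^\ve)$ in $L^2(\Omega_T\times\widetilde{Y}_x)$ inherited from the $H^1$-bound and Aubin--Lions, and the strong two-scale convergence of $r^\ve_f, r^\ve_b$ coming from the uniform bound on $\ve^{1/2}\partial_t r^\ve_j$. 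Once strong two-scale convergence of both factors on the surface is established, the coupled macroscopic system follows, and uniqueness of the limit problem (proved exactly as in Lemma~\ref{apriori}) ensures convergence of the whole sequence, not merely a subsequence.
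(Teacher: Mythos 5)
Your overall architecture (extension, locally-periodic approximation, l-p unfolding, cell problems, strong convergence of the surface unfoldings to handle the nonlinearity) matches the paper's, but the central approximation step is set up differently and, as written, has a genuine gap. You introduce an \emph{auxiliary solution} $(\tilde c^\ve,\tilde r^\ve_f,\tilde r^\ve_b)$ of the same system posed on $\widetilde\Omega^\ast_\ve$, $\widetilde\Gamma^\ve$ and propose to estimate the difference of the two solutions by ``testing the PDE for the difference by itself.'' The two solutions live on different perforated domains with different boundary surfaces, so their difference satisfies no usable equation; controlling it would require extending both, comparing boundary terms supported on $\Gamma^\ve$ against terms supported on $\widetilde\Gamma^\ve$, and absorbing the resulting commutators --- none of which is sketched. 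The paper avoids this entirely: it keeps the \emph{single} solution $(c^\ve,r^\ve_f,r^\ve_b)$ (suitably extended to $\Omega$) and only replaces the domains of integration and the coefficients in the \emph{weak formulation}, showing that the mismatch terms $\langle\cdot,\phi(\chi_{\Omega^\ast_\ve}-\chi_{\widetilde\Omega^\ast_\ve})\rangle$ and the difference of the boundary integrals over $\Gamma^\ve$ and $\widetilde\Gamma^\ve$ vanish as $\ve\to 0$. Relatedly, you do not explain how $r^\ve_f,r^\ve_b$, which are defined only on $\Gamma^\ve$, acquire values on $\widetilde\Gamma^\ve$ so that the boundary unfolding operator (defined on $\widetilde\Gamma^\ve$) can be applied to them; the paper does this by extending them to all of $\Omega$ as solutions of the ODEs \eqref{extension_rfb} driven by the extended $\tilde c^\ve$, and the resulting gradient bound $\|\nabla\tilde r^\ve_j\|=O(\ve^{-1})$ in \eqref{estim_grad_r} --- compensated by the $O(\ve^{1+r})$ displacement between corresponding points of the two microstructures --- is exactly what makes the comparison of the surface integrals work. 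This mechanism is absent from your sketch.

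There is also a quantitative error in the characteristic-function estimate. With fibres of radius $\sim\ve$, length $\sim\ve^r$ per subdomain $\Omega^\ve_n$, displacement $|\tau|\sim\ve^{2r}$ from \eqref{Taylor_Approx}, and $O(\ve^{-3})$ fibre segments in total, Lemma~\ref{Differ_Charact} gives $\|\chi_{\Omega^\ast_\ve}-\chi_{\widetilde\Omega^\ast_\ve}\|^2_{L^2(\Omega)}\le C(\ve^{r}+\ve^{3r-2})$ (the first term coming from the variation of the radius $\rho(x^\ve_{k_j})$ versus $\rho(x^\ve_n)$, which you omit), not $C\ve^{2r-1}$. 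Consequently the admissible range is $r\in(2/3,1)$, not $r\in(1/2,1)$; your choice of $r$ would not make the bulk mismatch terms vanish. The remaining stages --- the a priori bounds, the unfolding compactness, the test functions $\psi_1+\ve\mathcal L^\ve_\rho(\psi_2)$, the identification $c_1=\sum_j\partial_{x_j}c\,w^j$, and the Cauchy-sequence argument giving strong convergence of $\mathcal T^{b,\ve}_{\mathcal L}(r^\ve_j)$ needed for the nonlinear surface term --- are in line with the paper.
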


\begin{proof} 
To derive macroscopic equations for the microscopic problem posed in a domain with non-periodic microstructure we shall approximate it by equations posed in a domain with locally-periodic microstructure and then apply methods of locally-periodic homogenization.  

Using calculations  from above we consider a domain with a locally-periodic microstructure characterised  by the periodicity cell $\widetilde Y_{x_n^\ve} = D_{x_n^\ve} Y$ in each $\Omega_n^\ve$, with  $n = 1, \ldots, N_\ve$.   We consider the shift $x_n^\ve$ in the covering of $\Omega_n^\ve$ by $D_{x_n^\ve} (Y+\xi)$,  with  $\xi \in  \Xi_n^\ve$. 

Then the  characteristic function of the inter-cellular space $\widetilde \Omega^\ast_\ve$ in a tissue with the locally-periodic  plywood-like microstructure is defined by 
$
\chi_{\widetilde \Omega^\ast_{\ve}}= (1-  \chi_{\widetilde\Omega^{\ve}_{f}})\chi_\Omega$, where 
$\chi_{\widetilde\Omega^{\ve}_{f}}$ denotes the characteristic function of fibres   
$$
 \chi_{\widetilde \Omega^{\ve}_{f}} = \sum_{n=1}^{N_\ve} \chi_{\widetilde \Omega^{\ve}_{n,f}} \; \; \text{ and  }\; \; 
 \chi_{\widetilde \Omega^{\ve}_{n,f}} =  \sum_{\xi\in \Xi_n^\ve} 
\vartheta(x_n^\ve, R^{-1}_{x_n^\ve}(x- x_n^\ve-  \ve D_{x_n^\ve} \xi)/\ve)\,  \chi_{\Omega_n^\ve}. 
$$
The boundaries of the locally-periodic microstructure are denoted by  
 $$\widetilde \Gamma^\ve = \bigcup_{n=1}^{N_\ve}\bigcup_{\xi\in \Xi_n^\ve} (x_{n}^\ve + \ve R_{x_{n}^\ve} K_{x_n^\ve} \Gamma  +  \ve D_{x_n^\ve}  \xi)\cap\Omega.
 $$

Notice that non-periodic changed in the shape of the perforations can be approximated locally-periodic by the same function. This is  consistent  with the results obtained in \cite{Chechkin,Mascarenhas2,Mascarenhas3,AdrianTycho2}. However spatial  changes in the periodicity  are approximated  with a different  spatially-dependent periodicity in the  locally-periodic microstructure.


We define the reaction rates  in term of locally-periodic microstructure
$$
\begin{aligned}
\widetilde \alpha^\ve(x)= \sum_{n=1}^{N_\ve}\sum_{\xi \in  \Xi_n^\ve} \alpha(x,(R^{-1}_{x_n^\ve}(x-x_n^\ve) - W_{x_n^\ve} \ve \xi)/\ve)\chi_{\Omega_n^\ve},\\
\widetilde \beta^\ve(x)= \sum_{n=1}^{N_\ve}\sum_{\xi \in  \Xi_n^\ve}\beta(x,(R^{-1}_{x_n^\ve}(x-x_n^\ve) - W_{x_n^\ve} \ve \xi)/\ve)\chi_{\Omega_n^\ve}. 
\end{aligned}
$$  


To  show that we can approximate the problem  \eqref{micro_model_1}-\eqref{micro_model_2}  considered in the tissue with the non-periodic microstructure by a locally-periodic one, we have to prove that  the difference between the characteristic function of the original domain with non-periodic microstructure  $\chi_{\Omega^\ast_\ve}$ and   of the  locally-periodic 
 perforated domain   $ \chi_{\widetilde \Omega^\ast_{\ve}}$ converges to zero strongly in $L^2(\Omega)$ as $\ve \to 0$. Also we have to show   that the difference between  boundary integrals and their  locally-periodic approximations converges to zero as $\ve \to 0$. 
 This will ensure that as $\ve$ the sequence of solutions of the microscopic non-periodic problem will converge to a solution of  the same macroscopic equations  as the sequence of solutions of  the locally-periodic microscopic problem. 
 Then applying the techniques of locally-periodic homogenization, i.e.\ l-t-s convergence and l-p unfolding operator to the microscopic equations posed in the perforated domain with locally-periodic microstructure  we derive the macroscopic equations for  the original non-periodic problem.

For the difference between two characteristic functions we have 
\begin{eqnarray*}
&& \int_\Omega |\chi_{\Omega_\ve^\ast} - \chi_{\widetilde \Omega_{\ve}^\ast}|^2 dx =\mathcal I_1 +\mathcal  I_2\\
&&= \sum_{n=1}^{N_\ve} \int_{\Omega_n^\ve}  \sum_{j \in J_n^\ve}  
\left| \vartheta(x^\ve_{k_{j}}, (R^{-1}_{x_{k_j}^\ve}(x-x_{k_j}^\ve)/\ve) -   \vartheta(x_n^\ve, R^{-1}_{x_{k_j}^\ve}(x-x^\ve_{k_j})/\ve)\right |^2 dx \\
&& +\sum_{n=1}^{N_\ve} \int\limits_{\Omega_n^\ve}  \sum_{j \in J_n^\ve}   \big| \vartheta(x_n^\ve, R^{-1}_{x_{k_j}^\ve}(x-x_{k_j}^\ve)/\ve) 
-   \vartheta(x^\ve_n, (R^{-1}_{x_n^\ve}(x- x_n^\ve)-  \ve W_{x_n^\ve} j)/\ve)\big |^2 dx, 
\end{eqnarray*}
where $x^\ve_{k_{j}}= R_{x^\ve_{k_j}} \ve k_{j}$,  with $k_j=\kappa_n+ j$, $x^\ve_n=R_{x^\ve_n} \ve \kappa_n$, and 
$$J_n^\ve=\{j \in \mathbb Z^3:
\big[ (\ x^\ve_{k_j}+\ve R_{x^\ve_{k_j}}Y_1) \cup ( x^\ve_{n} + \ve R_{x^\ve_{n}} Y_1 + \ve D_{x_n^\ve} j) \big]\cap \Omega_n^\ve \neq \emptyset \}. $$ 
We notice that $\ve^3 |J_n^\ve| \leq C \ve^{3r}$ and $|N_\ve| \leq C \ve^{-3r}$. For the first integral we have 
\begin{eqnarray*}
\mathcal I_1 \leq \sum_{n=1}^{N_\ve}\ve^3  |J_n^\ve| \,   \|\nabla \rho\|_{L^\infty(\Omega)} \sup_{j \in J_n^\ve}|x_n^\ve - x^\ve_{k_{j}}| \leq C \ve^r. 
\end{eqnarray*}
To estimate the second term we use Lemma~\ref{Differ_Charact}.
Since in each $\Omega_n^\ve$ the length of  fibres is of order $\ve^r$, applying estimate in Lemma~\ref{Differ_Charact}, equality \eqref{Taylor_Approx}, and  the estimates $N_\ve\leq C \ve^{-3r}$ and $|J_n^\ve|\leq C \ve^{3(r-1)}$  we conclude that
 \begin{equation*}\label{estim_non_local}
 \begin{aligned}
\mathcal I_2 \leq  C\ve^{3r-2}.  
\end{aligned}
\end{equation*}
Thus for $r \in (2/3,1)$ we have $\mathcal I_1\to 0$ and  $\mathcal I_2 \to 0$ as $\ve\to 0$.

To estimate the difference between boundary integral we have to extend $c^\ve$ and $r_j^\ve$, with $j=f,b$  from $\Omega_\ve^\ast$ to $\Omega$. For $c^\ve$ we can consider the extension as in Lemma~\ref{extension_local}. Then using the extended  $\tilde c^\ve$ and the fact that the reaction rates and the initial data  are  defined  on whole $\Omega$ we can extend $r_f^\ve$ and $r_b^\ve$ to $\Omega$ by considering  solutions of ordinary differential equations  with  $\tilde c^\ve$  instead of $c^\ve$
\begin{equation}\label{extension_rfb}
\begin{aligned}
& \partial_t \tilde r^\ve_f  = p(\tilde r_b^\ve) -  \alpha^\ve(x)\,  \tilde c^\ve\, \tilde r_f^\ve + \beta^\ve(x) \, \tilde r_b^\ve -d_f\,  \tilde r_f^\ve  \quad  && \text{ in } \,(0,T)\times \Omega,\\
& \partial_t \tilde r^\ve_b  = \phantom{ p^\ve(\tilde r_f^\ve) - }  \; \alpha^\ve(x)\,  \tilde c^\ve \, \tilde r_f^\ve - \beta^\ve(x)\, \tilde  r_b^\ve - d_b\,  \tilde  r_b^\ve  \quad  && \text{ on } \, (0,T)\times \Omega, \\
&\tilde  r_f^\ve(0, x) = r_{f0}^\ve(x), \quad \qquad \tilde r_b^\ve(0, x) = r_{b0}^\ve(x) \qquad && \text{ in } \, \Omega. 
\end{aligned}
\end{equation}
The non-negativity of  $c^\ve$ and the construction of the extension ensure   that  $\tilde c^\ve$ is non-negative.  Then in the same way as for $r_j^\ve$, using the properties of $p$ and the non-negativity of  the coefficients and initial data we obtain the non-negativity of $\tilde r^\ve_j$. Thus adding the equations for $\tilde r^\ve_f$ and $\tilde r^\ve_b$ we obtain the boundedness of $\tilde r^\ve_j$ in $\Omega_T$, i.e.
\begin{equation*}
\|\tilde r^\ve_f\|_{L^\infty(\Omega_T)} + \|\tilde r^\ve_b\|_{L^\infty(\Omega_T)} \leq C.  
\end{equation*}
Differentiating equations in  \eqref{extension_rfb} with respect to $x$ and using the estimate
$\|\nabla \tilde r_b^\ve\|_{L^2(\Omega)} \leq \|\nabla \tilde r_b^\ve+\nabla \tilde r_f^\ve \|_{L^2(\Omega)} + 
\|\nabla \tilde r_f^\ve \|_{L^2(\Omega)}$ we obtain 
\begin{equation}\label{estim_grad_r}
\begin{aligned}
\|\nabla \tilde r^\ve_j\|_{L^\infty(0,T; L^2(\Omega))} &\leq \mu_1 \| \nabla \tilde c^\ve\|_{L^2(\Omega_T)} + \ve^{-1} \mu_2 \|\tilde  c^\ve\|_{L^2(\Omega_T)} \\
&+ \ve^{-1}\mu_3\big[ \| \tilde r_f^\ve\|_{L^\infty(\Omega_T)} + \|\tilde r_b^\ve\|_{L^\infty(\Omega_T)} \big]
\leq \mu_4 \big( 1 + \frac 1\ve\big), 
\end{aligned}
\end{equation}
where the constants $\mu_j$, with $j=2,3,4$, depend on the derivatives of $\alpha$, $\beta$,  and $\mu_j$, with $j=1,2,3,4$,  are independent of $\ve$. Hence the extensions $\tilde r^\ve_j$ and $\tilde c^\ve$ are well-defined on the boundaries $\widetilde \Gamma_{\ve}$ of the locally-periodic microstructure. In what follows we shall use the same notation for a function and for its extension.  Notice that $\ve^{-1}$ in the estimates for $\nabla r^\ve_j$ will be compensated by $\ve$ in the estimate for  the difference between neighbouring points in periodic and locally-periodic domains, respectively, i.e.\ $|\ve  R_{x^\ve_{k_j}} K_{x_{k_j}^\ve} y - \ve  R_{x_n^\ve}K_{x_n^\ve} y| \leq  C \ve^{1+r}(1+\|\gamma^\prime\|_{L^\infty(\mathbb R)})(1+ \|\nabla K \|_{L^\infty(\Omega)}) $. 
 Then,  for the boundary integrals we have  
 \begin{eqnarray*}
&&\ve  \Big|  \int_{\Gamma^\ve } \alpha^\ve r^\ve_f  \, c^\ve\,  \psi \, d\sigma_{x}^\ve - \int_{\widetilde \Gamma^\ve }\widetilde \alpha^\ve r^\ve_f \, c^\ve\,  \psi \, d\sigma_{x}^\ve\Big|   = \mathcal I_3 + \mathcal I_4\\ && = 
\ve \sum_{n=1}^{N_\ve} \sum_{j \in J_n^\ve}\Big| \int_{\ve K^R_{x_{k_j}^\ve}\Gamma  +x_{k_j}^\ve } \alpha ^\ve r^\ve_f  \, c^\ve \psi \, d\sigma_{x}^\ve - 
  \int_{\ve K^R_{x_{n}^\ve}\Gamma  +  x_{k_j}^\ve} \alpha^\ve  r^\ve_f \,  c^\ve\psi \, d\sigma_{x}^\ve\Big| \\
&& +\ve \sum_{n=1}^{N_\ve} \sum_{j \in J_n^\ve}\Big| \int_{\ve K^R_{x_n^\ve}\Gamma  
  +x_{k_j}^\ve } \alpha^\ve r^\ve_f \, c^\ve \psi \, d\sigma_{x}^\ve - 
  \int_{\ve K^R_{x_{n}^\ve}\Gamma  +x_{n}^\ve +  \ve D_{x_n^\ve} j }\widetilde \alpha^\ve r^\ve_f\, c^\ve \psi \, d\sigma_{x}^\ve\Big|,
   \end{eqnarray*}
  for $\psi \in C^\infty(\overline\Omega_T)$, where $K^R(x)=  R(x)K(x)$. Considering the regularity of $K$ and $R$ and the uniform boundedness from below and above of $|\det{K}|$, and using the trace estimate for the $L^2(\Gamma)$-norm of  
   a   $H^\varsigma(Y)$-function, with  $\varsigma \in (1/2, 1)$,  the first integral we can estimates as   
 \begin{eqnarray*}
\mathcal I_3\leq  C_1  \ve^d  \sum_{n=1}^{N_\ve} \sum_{j \in J_n^\ve} \int\limits_{\Gamma} \Big |
\alpha^\ve r^\ve_f (t,y_{k_j^\ve})   c^\ve(t,y_{k_j^\ve})  -  \alpha^\ve r^\ve_f (t,y_{\kappa_n^\ve}) c^\ve(t,y_{\kappa_n^\ve})\Big | d\sigma_y  \\
 + C_2 \ve^r \leq  C_3\Big[ \ve^{\frac{d+1}2} \sum_{n=1}^{N_\ve} \sum_{j \in J_n^\ve} \|c^\ve\|_{L^2(\Gamma_{n,j}^\ve)}
\Big[ \int_\Gamma \big | \alpha^\ve(y_{k_j^\ve}) - \alpha^\ve(y_{\kappa_n^\ve})\big|^2 d\sigma_y \Big]^{\frac 12 }\\
+ C_4 \ve^d \sum_{n=1}^{N_\ve} \sum_{j \in J_n^\ve}\Big [\int_Y | c^\ve(t,y_{k_j^\ve}) - c^\ve(t,y_{\kappa_n^\ve})|^2  +   | r^\ve_f(t, y_{k_j^\ve}) - r_f^\ve(t,y_{\kappa_n^\ve})|^2 dy\\
+  \int_Y\int_Y \frac{\big|[c^\ve(t,y^1_{k_j^\ve})- c^\ve(t,y^2_{k_j^\ve}) ]- [c^\ve(t,y^1_{\kappa^\ve_n})- c^\ve(t,y^2_{\kappa^\ve_n})]\big|^2}{|y^1-y^2|^{2\varsigma+ d}} dy^1 dy^2 \\
 +   \int_Y \int_Y \frac {\big|[r^\ve_f(y^1_{k_j^\ve})-r^\ve_f(y^2_{k_j^\ve}) ]-  [r^\ve_f(y^1_{\kappa_n^\ve})-r^\ve_f(y^2_{\kappa_n^\ve})]\big|^2}{|y^1-y^2|^{2\varsigma+ d}}  dy^1 dy^2  \Big ]^{\frac 12}\times 
\\
\times \Big[\int_\Gamma \big(|c^\ve(y_{k_j^\ve})|^2   + |r_f^\ve(y_{\kappa_n^\ve})|^2\big) d \sigma_y\Big]^{\frac 12}+ C_5 \ve^r, 
   \end{eqnarray*}
where $d=\text{dim}(\Omega)=3$ and  $\Gamma_{n,j}^\ve =x_{k_j}^\ve+ \ve K^R_{x_{k_j}^\ve} \Gamma =x_{k_j}^\ve+\ve R(x_{k_j}^\ve)K(x_{k_j}^\ve)\Gamma $, with $j \in J_n^\ve$ and $n=1, \ldots, N_\ve$.
Here we used the short notations   $y_{k_j^\ve} =x_{k_j}^\ve+ \ve K^R_{x_{k_j}^\ve} y $,   $y_{\kappa_n^\ve} =x_{k_j}^\ve+ \ve K^R_{x_{n}^\ve} y $, and 
$y^l_{k_j^\ve} =x_{k_j}^\ve+ \ve K^R_{x_{k_j}^\ve} y^l$,  $y^l_{\kappa_n^\ve} =x_{k_j}^\ve+\ve K^R_{x_{n}^\ve} y^l $, for $l=1,2$. 

Using the regularity of $\gamma$, $K$, and $\alpha$, and applying a priori estimates for $c^\ve$ and $r^\ve_f$, together with \eqref{estim_grad_r},  we obtain for $0<\varsigma_1< 1/2$, with $\varsigma+ \varsigma_1 = 1$,
 \begin{eqnarray*}
\int_0^T \mathcal I_3dt  \leq \mu_1 \sum_{n=1}^{N_\ve} 
 \sum_{j\in J_n^\ve} \ve\int_0^T \left[\|\nabla r^\ve_{f}\|_{L^2(Y_{k_j}^\ve)} + \|\nabla c^\ve\|_{L^2(Y_{k_j}^\ve)}+  \|\nabla \alpha^\ve\|_{C(Y_{k_j}^\ve)}\right]dt \\
 \times \, \|\gamma^\prime\|_{L^\infty(\mathbb R)} \|\nabla  K\|_{L^\infty(\Omega)}\big[ \sup_{j \in J_n^\ve}|x_{k_j}^\ve - x_n^\ve|  +\sup_{j \in J_n^\ve}|x_{k_j}^\ve - x_n^\ve|^{\varsigma_1}] + \mu_1 \ve^r \leq \mu \ve^{\varsigma_1 r},
\end{eqnarray*}
where $Y_{k_j}^\ve= x_{k_j}^\ve+\ve R_{x_{k_j}^\ve} Y_1$. 
 Conducting similar calculations as for the estimates of $\mathcal I_3$ and using the fact that  $|x_{k_j}^\ve - x_{n}^\ve -  D_{x_n^\ve} \ve j| \leq C_1 |\ve j|^2\leq C_2 \ve^{2 r}$  yield 
 \begin{eqnarray*}
\int_0^T\mathcal I_4 dt \leq \mu \sum_{n=1}^{N_\ve}   \sum_{j \in J_n^\ve}
\int_0^T \left[\|\nabla r^\ve_{f}\|_{L^2(\widetilde Y_{n,j}^\ve)} + \|\nabla c^\ve\|_{L^2(\widetilde Y_{n,j}^\ve)}+  \|\nabla \alpha^\ve\|_{C(\widetilde Y_{n,j}^\ve)}\right] dt
\\ \times \ve^\varsigma \,   |R^{-1}_{x_{k_j}^\ve}(x-x_{k_j}^\ve) -R^{-1}_{x_{n}^\ve}(x- x_{n}^\ve) -  W_{x_n^\ve} \ve j|^{\varsigma_1}     \leq \mu_1 \ve^{(2r-1)\varsigma_1},
  \end{eqnarray*}
 where $\varsigma+ \varsigma_1 =1$ and $\widetilde Y_{n,j}^\ve=x_{n}^\ve + \ve R(x_{n}^\ve)Y_1  +  \ve D_{x_n^\ve}  j $.
Combining   the estimates for $\mathcal I_3$ and $\mathcal I_4$
 we conclude that  for $r > 1/2$ the difference between the  boundary  integrals for non-periodic and locally-periodic microstructures convergences to zero as $\ve \to 0$. 
 In a similar way we obtain  the estimates for  other boundary integrals.  
 

We rewrite the weak formulation of the microscopic  equations as 
\begin{equation*}
\begin{aligned}
\langle \partial_t c^\ve - F(c^\ve), \phi \chi_{\Omega_\ve^\ast} \rangle_{\Omega_T}  +
\langle A\nabla c^\ve, \nabla \phi \chi_{\Omega_\ve^\ast} \rangle_{\Omega_T}   - \ve 
\langle \beta^\ve r_b^\ve -\alpha^\ve c^\ve r_f^\ve,  \phi \rangle_{\Gamma^\ve_T} 
\\=
\Big[\langle \partial_t c^\ve - F(c^\ve), \phi \chi_{\widetilde \Omega_\ve^\ast} \rangle_{\Omega_T}  +
\langle A\nabla c^\ve, \nabla \phi \chi_{\widetilde \Omega_\ve^\ast} \rangle_{\Omega_T} 
 - \ve 
\langle \widetilde \beta^\ve r_b^\ve -\widetilde \alpha^\ve c^\ve r_f^\ve,  \phi \rangle_{\widetilde \Gamma^\ve_T} \Big]
\\
+\Big[ \langle \partial_t c^\ve - F(c^\ve), \phi (\chi_{\Omega_\ve^\ast}-\chi_{\widetilde \Omega_\ve^\ast}) \rangle_{\Omega_T}  +
\langle A\nabla c^\ve, \nabla \phi (\chi_{\Omega_\ve^\ast}-\chi_{\widetilde \Omega_\ve^\ast}) \rangle_{\Omega_T}  \\ -\ve \Big[ 
\langle \beta^\ve r_b^\ve -\alpha^\ve c^\ve r_f^\ve,  \phi \rangle_{\Gamma^\ve_T} -
\langle \widetilde \beta^\ve r_b^\ve -\widetilde \alpha^\ve c^\ve r_f^\ve,  \phi \rangle_{\widetilde \Gamma^\ve_T} \Big]=I_1 +I_2 +I_3.
\end{aligned}
\end{equation*} 
for $\phi \in C^\infty(\overline\Omega_T)$. Due to the estimates for $\mathcal I_1$,  $\mathcal I_2$, $\mathcal I_3$, and $\mathcal I_4$, shown above,  we have that  $I_2 \to 0$ and $I_3 \to$ as $\ve \to 0$. Thus we obtain 
\begin{equation*}
\begin{aligned}
\lim\limits_{\ve \to 0}\Big[\langle \partial_t c^\ve- F(c^\ve), \phi \chi_{\Omega_\ve^\ast} \rangle_{\Omega_T}  +
\langle A\nabla c^\ve, \nabla \phi \chi_{\Omega_\ve^\ast} \rangle_{\Omega_T} 
 - \ve 
\langle \beta^\ve r_b^\ve -\alpha^\ve c^\ve r_f^\ve,  \phi \rangle_{\Gamma^\ve_T} \Big]
\\=
\lim\limits_{\ve \to 0} \Big[\langle \partial_t c^\ve-  F(c^\ve), \phi \chi_{\widetilde \Omega_{\ve}^\ast} \rangle_{\Omega_T}  +
\langle A\nabla c^\ve, \nabla \phi \chi_{\widetilde \Omega_\ve^\ast} \rangle_{\Omega_T} 
 - \ve 
\langle \widetilde \beta^\ve r_b^\ve -\widetilde \alpha^\ve c^\ve r_f^\ve,  \phi \rangle_{\widetilde \Gamma^\ve_T} \Big]
\end{aligned}
\end{equation*} 
The definition of $\widetilde \Omega_\ve^\ast$, $\widetilde \Gamma^\ve$, $\widetilde \alpha^\ve$,  and $\widetilde \beta^\ve$ 
implies that the original non-periodic problem is approximated by equations posed in a domain with  locally-periodic microstructure. 
Hence we can apply the locally-periodic two-scale convergence (l-t-s) and the l-p unfolding operator method to derive the limit equations. 

The coefficients $\widetilde A^\ve$, $\widetilde \alpha^\ve$, $\widetilde \beta^\ve$ can be  defined as locally-periodic approximations 
\begin{eqnarray*}
&&A \chi_{\widetilde \Omega^\ast_\ve} = \mathcal L^\ve_0(\widetilde A), \quad 
\widetilde \alpha^\ve =\mathcal L^\ve_0(\widetilde \alpha), \quad  
\widetilde \beta^\ve =\mathcal L^\ve_0(\widetilde \beta) \; \; \text{ with } \;\; \tilde x_n^\ve = x_n^\ve,
  \end{eqnarray*} 
where
  $\widetilde A(x,y) = A(1- \widetilde \vartheta(x,y))$ and   $\widetilde \vartheta(x,y) =  \sum_{k\in \mathbb Z^3} \vartheta(x, R^{-1}(x)(y-D_x k))$, and $\widetilde \alpha$, $\widetilde \beta$ are given  by 
  \eqref{tilde_b_a} (see Appendix for the definition of   locally-periodic approximation $\mathcal L^\ve_0$).
   The regularity assumptions on $\alpha$, $\beta$, $K$, and $R$ ensure  that  $\widetilde A \in L^\infty(\bigcup_{x \in \Omega}\{ x\} \times\widetilde Y_x)$,  $\widetilde A \in  C(\overline \Omega; L^p_{\text{per}}(\widetilde Y_x))$, for $1\leq p < \infty$, and $\widetilde \alpha,\,  \widetilde \beta \in C(\overline \Omega; C_{\text{per}}(\widetilde Y_x))$. 
  
Using the extension of $c^\ve$ we have that  the sequences $\{c^\ve\}$, $\{\nabla c^\ve\}$ and $\{\partial_t c^\ve\}$ are defined on $\Omega_T$ and 
  we  can determine $\mathcal T^\ve_{\mathcal L} (c^\ve)$,  $\mathcal T^\ve_{\mathcal L} (\nabla c^\ve)$,   $\partial_t \mathcal T^\ve_{\mathcal L} (c^\ve)$, and $\mathcal T^{\ve,b}_{\mathcal L} (c^\ve)$. The properties of $\mathcal T^\ve_{\mathcal L}$ and $\mathcal T^{\ve, b}_{\mathcal L}$ together with the estimates \eqref{receptor_a_priori}  ensure
\begin{eqnarray*}
&&\|\mathcal T^\ve_{\mathcal L} (c^\ve)\|_{L^2(\Omega_T\times Y)}+\|\mathcal T^\ve_{\mathcal L} (\nabla c^\ve)\|_{L^2(\Omega_T\times Y)}+ \|\partial_t \mathcal T^\ve_{\mathcal L}(c^\ve) \|_{L^2(\Omega_T\times Y)} \leq C,
\\
&& \|\mathcal T^{\ve,b}_{\mathcal L} (c^\ve)\|_{L^2(\Omega_T\times \Gamma)} +
\sum_{j=f,b}\|\mathcal T^{\ve,b}_{\mathcal L} (r_j^\ve)\|_{H^1(0,T;L^2(\Omega\times \Gamma))} \leq C.
\end{eqnarray*}
Then, the convergence results for l-s-t convergence and   l-p unfolding operator, see \cite{Ptashnyk13,Ptashnyk15} or Appendix,   imply that there exist  subsequences (denoted again by $c^\ve$, $r^\ve_f$ and $r^\ve_b$) and the functions $c\in L^2(0,T; H^1(\Omega))$, $\partial_t c \in L^2(\Omega_T)$,  $c_1 \in L^2(\Omega_T; H^1_{\text{per}}(\widetilde Y_x))$,  $r_j \in H^1(0,T; L^2(\Omega; L^2(\widetilde \Gamma_x)))$ such that 
\begin{eqnarray}\label{conver_parab_2}
  \begin{aligned}
&\mathcal T_\mL^\ve(c^\ve)  \to  c  && \text{strongly in } \; L^2(\Omega_T; H^1(Y)),
\\
&\partial_t \mathcal T_\mL^\ve(c^\ve)  \rightharpoonup \partial_t c \quad &&\text{weakly in } \; L^2(\Omega_T\times Y),
\\
 & \mathcal T_{\mathcal L}^{\ve}(\nabla c^\ve)  \rightharpoonup \nabla c +D_x^{-T} \nabla_{\tilde y} c_1(\cdot, D_x \cdot) \quad && \text{weakly in }\; L^2(\Omega_T\times Y), \\
 &  \mathcal T_{\mathcal L}^{b,\ve}(c^\ve) \to c  && \text{strongly in } \; L^2(\Omega_T; L^2(\Gamma)), \\
& r^\ve_j \to r_j, \quad \partial_t r^\ve_j \to \partial r_j \quad && \text{l-t-s},\quad r_j,\; \;  \partial_t r_j \in L^2(\Omega_T; L^2(\widetilde \Gamma_x)), \\ 
&  \mathcal T_{\mathcal L}^{b,\ve}(r^\ve_j)  \rightharpoonup  r_j(\cdot, \cdot, D_x \widetilde K_x \cdot), \; \;  \; && \text{weakly in } \;   L^2(\Omega_T \times \Gamma), \\
&  \partial_t \mathcal T_{\mathcal L}^{b,\ve}(r^\ve_j)  \rightharpoonup  \partial_t r_j(\cdot,\cdot, D_x \widetilde K_x \cdot) \; && \text{weakly in } \;   L^2(\Omega_T \times \Gamma), \quad j=f,b.  
\end{aligned}
\end{eqnarray}

Considering $\psi^\ve(x) =\psi_1(x) + \ve \mL_\rho^\ve(\psi_2)(x)$ with 
$\psi_1 \in C^1(\overline \Omega)$ and $\psi_2 \in C^1_0(\Omega; C^1_{\text{per}}(\widetilde Y_x))$ as a test function in \eqref{sol_weak_l} (see Appendix for the definition of $\mL_\rho^\ve$) and  applying l-p unfolding operator and l-p boundary unfolding operator  imply
\begin{eqnarray*}
\langle \mathcal T_{\mathcal L}^{\ve}( \chi^\ve_{\widetilde \Omega_\ve^\ast})\,  \partial_t  \mathcal T_{\mathcal L}^{\ve}(c^\ve), \mathcal T_{\mathcal L}^{\ve}(\psi^\ve) \rangle_{\Omega_T\times Y}  + \langle  \mathcal T_{\mathcal L}^{\ve}(A \chi^\ve_{\widetilde \Omega_\ve^\ast})\,  \mathcal T_{\mathcal L}^{\ve}(\nabla c^\ve),  \mathcal T_{\mathcal L}^{\ve}(\nabla \psi^\ve) \rangle_{\Omega_T\times Y} \\
=\langle\mathcal T_{\mathcal L}^{\ve}( \chi^\ve_{\widetilde \Omega_\ve^\ast})\,   F(\mathcal T_{\mathcal L}^{\ve}(c^\ve)),  \mathcal T_{\mathcal L}^{\ve}(\psi^\ve) \rangle_{\Omega_T\times Y}   \\
+   \Big\langle \sum_{n=1}^{N_\ve} \frac{\sqrt{g_{x_n^\ve}}}{\sqrt{g}|\widetilde Y_{x_n^\ve}|}  \Big[ \mathcal T_{\mathcal L}^{b,\ve}(\widetilde \beta^\ve\,  r^\ve_b) - \mathcal T_{\mathcal L}^{b,\ve}(\widetilde \alpha^\ve) \mathcal T_{\mathcal L}^{b,\ve} (c^\ve) \mathcal T_{\mathcal L}^{b,\ve} (r^\ve_f)\Big]\chi_{\Omega_n^\ve}, \mathcal T_{\mathcal L}^{b,\ve}(\psi^\ve) \Big\rangle_{\Omega_T\times \Gamma}\\
- \langle \partial_t c^\ve, \psi^\ve\rangle_{\Lambda^\ast_\ve, T} -\langle A\nabla c^\ve, \nabla\psi^\ve\rangle_{\Lambda^\ast_{\ve},T} + \langle F(c^\ve), \psi^\ve\rangle_{\Lambda^\ast_\ve,T},  
\end{eqnarray*}
where  $\chi^\ve_{\widetilde \Omega_\ve^\ast}=\mathcal L^\ve_0(\chi_{\widetilde Y_{x,K}^\ast})$ and $\chi_{\widetilde Y_{x,K}^\ast}$ is the characteristic function of $\widetilde Y^\ast_{x,K}$ extended $\widetilde Y_x$-periodically to $\mathbb R^3$. 

Applying the results shown in \cite{Ptashnyk15} implies 
   $\mathcal T_{\mathcal L}^{\ve}( \chi^\ve_{\widetilde\Omega_\ve^\ast})(x,\tilde y) \to \chi_{\widetilde Y^\ast_{x,K}} (x,  D_x \tilde y)$  in $L^p(\Omega_T\times Y)$ as well as    $\mathcal T_{\mathcal L}^{b,\ve}(\widetilde\beta^\ve)(x,\hat y)\to \widetilde \beta(x,   D_x \widetilde K_x \hat y)$ and 
$\mathcal T_{\mathcal L}^{b,\ve}(\widetilde\alpha^\ve)(x,\hat y)\to\widetilde \alpha(x,   D_x \widetilde K_x \hat y)$ in $L^p(\Omega\times \Gamma)$,
as  $\ve \to 0$.  

Using the \textit{a priori} estimates for $c^\ve$ and $r^\ve_j$,    the strong convergence  of $\mathcal T_{\mathcal L}^{\ve}(c^\ve)$ in $L^2(\Omega_T; H^1(Y))$, the strong convergence and the boundedness  of  $\mathcal T_{\mathcal L}^{b,\ve}(\widetilde\alpha^\ve)$, the weak convergence and the boundedness of $\mathcal T_{\mathcal L}^{b,\ve} (r^\ve_f)$, together with the regularity of $D$, $R$, and $K$, and the strong convergence of  $\mathcal T_{\mathcal L}^{b,\ve}(\psi^\ve)$ we obtain 
\begin{eqnarray*}
\lim_{\ve \to 0} \Big\langle \sum_{n=1}^{N_\ve} \frac{\sqrt{g_{x_n^\ve}}}{\sqrt{g}|\widetilde Y_{x_n^\ve}|} 
 \mathcal T_{\mathcal L}^{b,\ve}(\widetilde\alpha^\ve) \mathcal T_{\mathcal L}^{b,\ve} (c^\ve) \mathcal T_{\mathcal L}^{b,\ve} (r^\ve_f)\chi_{\Omega_n^\ve}, \mathcal T_{\mathcal L}^{b,\ve}(\psi^\ve) \Big\rangle_{\Omega_T\times \Gamma}\\
  = \Big\langle \frac{ \sqrt{g_x}}{\sqrt{g} |\widetilde Y_x|}\widetilde \alpha(x, D_x \widetilde K_x \hat y) \, c(t,x) \, r_f (t,x, D_x \widetilde K_x \hat y), \psi_1(x) \Big\rangle_{\Omega_T\times \Gamma}. 
\end{eqnarray*}
Similar arguments along  with the Lipschitz continuity of $F$ and the strong convergence of  $\mathcal T_{\mathcal L}^{\ve}( \chi^\ve_{\widetilde\Omega_\ve^\ast})$ ensure the convergence 
\begin{eqnarray*}
\langle\mathcal T_{\mathcal L}^{\ve}( \chi^\ve_{\widetilde \Omega_\ve^\ast})\,  F(\mathcal T_{\mathcal L}^{\ve}(c^\ve)),  \mathcal T_{\mathcal L}^{\ve}(\psi^\ve) \rangle_{\Omega_T\times Y} \to 
\langle \chi_{\widetilde Y_{x,K}^\ast}(x, D_x \tilde y) \, F(c), \psi_1  \rangle_{\Omega_T\times Y}
 \end{eqnarray*}
 as $\ve \to 0$. 
Using  the  convergences results \eqref{conver_parab_2}, the strong convergence of $\mathcal T_{\mathcal L}^{\ve}(\psi^\ve)$ and $\mathcal T_{\mathcal L}^{\ve}(\nabla \psi^\ve)$ and the fact that $|\Lambda^\ast_\ve| \to 0$ as $\ve \to 0$,     taking the limit as $\ve \to 0$, and considering the transformation of variables $y=D_x \tilde y$  for $\tilde y \in Y$ and $y = D_x \widetilde K_x \hat y$ for $\hat y \in \Gamma$ we obtain
\begin{eqnarray*}
&&\langle|\widetilde Y_x|^{-1} c , \psi_1 \rangle_{\widetilde Y^\ast_{x,K}\times \Omega_T} + \langle |\widetilde Y_x|^{-1} A\,  (\nabla c + \nabla_y c_1), \nabla \psi_1  + \nabla_y \psi_2 \rangle_{\widetilde Y^\ast_{x,K}\times \Omega_T} \\
&&  + \langle |\widetilde Y_x|^{-1} \big[\widetilde \alpha(x,  y) \, r_f\,  c -\widetilde \beta(x,  y) \, r_b\big], \psi_1 \rangle_{\widetilde \Gamma_x \times \Omega_T} =  \langle  |\widetilde Y_x|^{-1} F(c),  \psi_1 \rangle_{\widetilde Y^\ast_{x,K}\times \Omega_T}.  
\end{eqnarray*}
Considering $\psi_1(t,x)=0$ for $(t,x) \in \Omega_T$ we obtain   
$$c_1(t,x,y) = \sum_{j=1}^3 \partial_{x_j} c(t,x) w^j (x,y), $$
where $w^j$ are solutions of \eqref{unit_cell}. 
Choosing  $\psi_2(t,x,y)=0$ for $(t,x)\in \Omega_T$ and $y\in \widetilde Y_x$  yields the macroscopic equation for $c$. 

Using the strong convergence of $\T_{\mL}^{b,\ve}(c^\ve)$ in $L^2(\Omega_T; L^2(\Gamma)))$, the estimates \eqref{receptor_a_priori} and\eqref{c_uniform}, and the Lipschitz continuity of $p$ we prove that  
$\{\T_{\mL}^{b,\ve}(r^\ve_j)\}$ is a Cauchy sequence in $L^2(\Omega_T; L^2(\Gamma))$, for $j=f,b$, and hence upto a subsequence, $\T_{\mL}^{b,\ve}(r^\ve_j) \to r_j (\cdot,\cdot, D_x \widetilde K_x \cdot)$ strongly in $L^2(\Omega_T; L^2(\Gamma))$.  
Then applying the l-p boundary unfolding operator  to the equations on $\widetilde \Gamma^\ve$ and taking the limit as $\ve \to 0$ we obtain  
the equations for $r_f$ and $r_b$. 
%
\end{proof}
{\em Remark. } Notice that for the proof of the homogenization results it is  sufficient to have a local extension of
 $c^\ve$ from $\Omega_\ve^\ast$ to $\Omega^\delta$, with $\Omega^\delta=\{x \in \Omega: \text{dist}(x, \partial \Omega) > \delta\}$ for any fixed $\delta >0$, and, hence, the local strong convergence of $\mathcal T_\mL^\ve(c^\ve)$   in  $L^2(0,T; L^2_{\rm loc}(\Omega; H^1(Y)))$.

  \section{Appendix: Definition and convergence results  for the l-t-s convergence and l-p  unfolding operator.}
  We shall consider the space   $C(\overline\Omega; C_{\text{per}}(\widetilde Y_x))$ given in a standard way, i.e. for any $\widetilde \psi \in C(\overline\Omega; C_{\text{per}}(Y))$ the relation   $\psi(x,y)= \widetilde \psi(x, D_x^{-1}y)$  with $x\in \Omega$ and $y \in \widetilde Y_x$ yields   $\psi \in C(\overline\Omega; C_{\text{per}}(\widetilde Y_x))$. In the same way  the spaces $L^p(\Omega; C_{\text{per}}(\widetilde Y_x))$, $L^p(\Omega; L^q_{\text{per}}(\widetilde Y_x))$  and  $C(\overline\Omega; L^q_{\text{per}}(\widetilde Y_x))$,  for $1\leq p\leq\infty$, $1\leq q <\infty$,  are given.
  
 Consider $\psi \in C(\overline\Omega; C_{\text{per}}(\widetilde Y_x))$ and  corresponding  $\widetilde \psi \in C(\overline\Omega; C_{\text{per}}(Y))$.
As a locally periodic (l-p) approximation  of $\psi$ 
we name   $\mathcal L^\ve: C(\overline\Omega; C_{\text{per}}(\widetilde Y_x))\to L^\infty(\Omega)$ given by, see \cite{Ptashnyk13},
\begin{equation}\label{loc-period-def}
(\mathcal L^\ve \psi)(x)=    \sum\limits_{n=1}^{N_\ve} \widetilde \psi\Big(x, \frac {D^{-1}_{x_n^\ve}(x-\tilde x_n^\ve) }\ve\Big)\chi_{\Omega_n^\ve}(x)  
\quad \text{ for } x\in \Omega.
\end{equation}
We   consider also  the map $\mathcal L^\ve_0: C(\overline\Omega; C_{\text{per}}(\widetilde Y_x)) \to L^\infty(\Omega)$ defined  for $x\in \Omega$ as
\[
(\mathcal L^\ve_0 \psi)(x)=\sum\limits_{n=1}^{N_\ve}  \psi\Big(x_n^\ve, \frac {x-\tilde x_n^\ve}\ve\Big)\chi_{\Omega_n^\ve}(x)=
 \sum\limits_{n=1}^{N_\ve} \widetilde \psi\Big(x_n^\ve, \frac {D^{-1}_{x_n^\ve}(x-\tilde x_n^\ve)}\ve\Big)\chi_{\Omega_n^\ve}(x).
 \]
 If we choose $\tilde x_n^\ve= D_{x_n^\ve} \ve k$ for some $k \in \mathbb Z^3$, then  the periodicity of $\widetilde \psi$ implies
\[
(\mathcal L^\ve \psi)(x)=   
 \sum\limits_{n=1}^{N_\ve} \widetilde \psi\Big(x, \frac {D^{-1}_{x_n^\ve}x }\ve\Big)\chi_{\Omega_n^\ve}(x), \; \;
  (\mathcal L^\ve_0 \psi)(x)=   
 \sum\limits_{n=1}^{N_\ve} \widetilde \psi\Big(x_n^\ve, \frac {D^{-1}_{x_n^\ve}x }\ve\Big)\chi_{\Omega_n^\ve}(x)
\]
 for  $x\in \Omega$, see e.g.\ \cite{Ptashnyk13} for more details. In the similar way  we define $\mathcal L^\ve\psi$ and  $\mathcal L^\ve_0\psi$ for $\psi$ in
  $C(\overline\Omega; L^q_{\text{per}}(\widetilde Y_x))$ or  $L^p(\Omega; C_{\text{per}}(\widetilde Y_x))$. 

We define also regular approximation of  $\mathcal L^\ve \psi$
 \[
 (\mathcal L^\ve_{\rho} \psi)(x)= \sum\limits_{n=1}^{N_\ve} \tilde \psi\Big(x, \frac {D^{-1}_{x_n^\ve} x}\ve\Big)\phi_{\Omega_n^\ve}(x)  
\quad \text{ for } x\in \Omega,
\]
where  $\phi_{\O_n^\ve}$  are approximations of    $\chi_{\Omega_n^\ve}$  
such that $\phi_{\O_n^\ve} \in C^\infty_0 (\Omega_n^\ve)$ and 
\begin{equation*}\label{ApproxCharactF}
 \sum\limits_{n=1}^{N_\ve}|\phi_{\O_n^\ve}  -\chi_{\Omega_n^\ve}| \to 0 \, \text{ in }  L^2(\Omega),\,   \, 
||\nabla^m \phi_{\O_n^\ve}||_{L^\infty(\mathbb R^d)}\leq C \ve^{-\rho m} \, \text{  for  } 0<r<\rho<1. 
\end{equation*}

We recall here the definition of locally periodic two-scale (l-t-s) convergence and l-p unfolding operator,  see \cite{Ptashnyk13,Ptashnyk15} for details.
\begin{definition}[\cite{Ptashnyk13}]\label{def_two-scale}
Let  $u^\ve\in  L^p(\Omega)$ for all $\ve >0$ and $1<p<\infty$. We say   the sequence $\{u^\ve\}$ converges l-t-s  to $u \in L^p(\Omega; L^p(\widetilde Y_x)) $ as $\ve \to 0$ if  
$\| u^\ve\|_{L^p(\Omega)} \leq C$ and for any $\psi \in L^q(\Omega; C_{\text{per}}(\widetilde Y_x))$  
\begin{eqnarray*}
\lim\limits_{\ve \to 0}\int_{\Omega} u^\ve(x) \mathcal L^\ve\psi(x) dx = \int_\Omega   \ddashinttt_{\widetilde Y_x}  u(x,y) \psi(x, y)  dy dx,
\end{eqnarray*}
where $\mathcal L^\ve $ is the l-p approximation of $\psi$ and $1/p+1/q=1$.
\end{definition}

\begin{definition}[\cite{Ptashnyk15}]\label{l-t-s_boundary}
A sequence $\{u^\ve \} \subset L^p(\widetilde\Gamma^\ve)$, with $1<p<\infty$, is said to converge locally periodic two-scale (l-t-s)  to $u \in L^p(\Omega; L^p(\widetilde\Gamma_x))$ if
$
\ve \|u^\ve \|^p_{L^p(\widetilde\Gamma^\ve)}  \leq C
$
and  for any $\psi \in C(\overline \Omega; C_{\text{per}}(\widetilde Y_x))$
\begin{eqnarray*}
\lim\limits_{\ve \to 0} \ve \int_{\widetilde\Gamma^\ve} u^\ve(x) \mL^\ve \psi(x) \, d\sigma_x =\int_{\Omega}  \frac 1{|\widetilde Y_x|}  \int_{\widetilde \Gamma_x} u(x,y) \psi(x, y) \, d\sigma_y dx, 
\end{eqnarray*}
where $ \mL^\ve $ is the l-p approximation of $\psi$ defined in \eqref{loc-period-def}. 
\end{definition}

\begin{lemma}[\cite{Ptashnyk15}]\label{Convergence_1_boundary}
For $\psi \in C(\overline\Omega; C_{\text{per}}(\widetilde Y_x))$ and $1\leq p <\infty$,  we have that 
\begin{eqnarray*}
\lim\limits_{\ve \to 0}\ve  \int_{\widetilde\Gamma^\ve} |\mL^\ve \psi (x) |^p \, d\sigma_x
 = \int_{\Omega} \frac 1 {|\widetilde Y_x|} \int_{\widetilde\Gamma_x}   |\psi(x,y) |^p d\sigma_y dx.
\end{eqnarray*}
\end{lemma}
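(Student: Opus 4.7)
The strategy is a locally periodic Riemann-sum argument: decompose the surface integral into contributions from each macro-cell $\Omega_n^\ve$, rescale within each cell to the reference period $Y$, exploit the $Y$-periodicity of $\widetilde\psi(x,\cdot)$ to reduce to a single-cell boundary integral, use uniform continuity in the slow variable to replace $x$ by $x_n^\ve$, and finally recognise the resulting sum as a Riemann sum for the right-hand side.

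More precisely, I would first write
\begin{equation*}
\ve\int_{\widetilde\Gamma^\ve}|\mathcal L^\ve\psi|^p\,d\sigma_x
= \sum_{n=1}^{N_\ve}\sum_{\xi\in\Xi_n^\ve}\ve\int_{(\tilde x_n^\ve+\ve D_{x_n^\ve}(\widetilde\Gamma_{x_n^\ve,K}+\xi))\cap\Omega_n^\ve}\bigl|\widetilde\psi\bigl(x,D_{x_n^\ve}^{-1}(x-\tilde x_n^\ve)/\ve\bigr)\bigr|^p d\sigma_x,
\end{equation*}
and on each micro-cell perform the change of variables $y=D_{x_n^\ve}^{-1}(x-\tilde x_n^\ve)/\ve-\xi$. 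This produces a Jacobian $\ve^{d-1}\sqrt{g_{x_n^\ve}(y)}$ on the surface and maps $\widetilde\Gamma_{x_n^\ve,K}+\xi$ to $\widetilde\Gamma_{x_n^\ve,K}$; the $Y$-periodicity of $\widetilde\psi(x,\cdot)$ eliminates the shift $\xi$ in the second argument of $\widetilde\psi$. Thus each $\xi$-integral becomes
$\ve^d\int_{\widetilde\Gamma_{x_n^\ve,K}}\bigl|\widetilde\psi(\tilde x_n^\ve+\ve D_{x_n^\ve}(y+\xi),y)\bigr|^p\sqrt{g_{x_n^\ve}(y)}\,d\sigma_y$, and by the further change $\tilde y=D_{x_n^\ve}y$ this equals $\ve^d|\det D_{x_n^\ve}|^{-1}\int_{\widetilde\Gamma_{x_n^\ve}}|\widetilde\psi(\cdot,D_{x_n^\ve}^{-1}\tilde y)|^p d\sigma_{\tilde y}=\ve^d|\widetilde Y_{x_n^\ve}|^{-1}\int_{\widetilde\Gamma_{x_n^\ve}}|\psi(\cdot,\tilde y)|^p d\sigma_{\tilde y}$.

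Next, since $\widetilde\psi\in C(\overline\Omega;C_{\text{per}}(Y))$ is uniformly continuous and $|x-x_n^\ve|\le C\ve^r$ on $\Omega_n^\ve$, replacing the first argument of $\widetilde\psi$ by $x_n^\ve$ introduces an error $o(1)$ uniformly in $n$ and $\xi$. Summing the resulting expressions over $\xi\in\hat\Xi_n^\ve$ (the full interior periodic copies) produces $|\hat\Xi_n^\ve|\,\ve^d=|\Omega_n^\ve|+o(|\Omega_n^\ve|)$, so the total contribution from interior cells is asymptotically
\begin{equation*}
\sum_{n=1}^{N_\ve}\frac{|\Omega_n^\ve|}{|\widetilde Y_{x_n^\ve}|}\int_{\widetilde\Gamma_{x_n^\ve}}|\psi(x_n^\ve,\tilde y)|^p d\sigma_{\tilde y}.
\end{equation*}
The continuity of $x\mapsto|\widetilde Y_x|^{-1}\int_{\widetilde\Gamma_x}|\psi(x,\tilde y)|^p d\sigma_{\tilde y}$ on $\overline\Omega$ (which follows from the Lipschitz continuity of $D$, $\widetilde K$ and the joint continuity of $\psi$, via a standard change of variables that transfers the $x$-dependence of $\widetilde\Gamma_x$ to a fixed reference surface) then identifies this as a Riemann sum for the target integral over $\Omega$.

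The main technical point is the control of the boundary contributions, namely the $\xi\in\Xi_n^\ve\setminus\hat\Xi_n^\ve$ for which the periodic copy is not entirely contained in $\Omega_n^\ve\cap\Omega$, together with the $n$ for which $\Omega_n^\ve\not\subset\Omega$. The number of such bad $\xi$ per cell is $O(\ve^{-(d-1)r})\cdot O(\ve^{-(d-1)})$-type, which after multiplication by the per-cell weight $\ve^d$ yields a contribution $O(\ve^{r})+O(\ve)$, hence negligible; a parallel estimate controls the region $\Lambda_\ve^\ast$. The remaining verification is routine: uniform bounds on $D$, $\widetilde K$, $R$, $|\det D|$ and on $\psi$ make the $o(1)$-terms in the uniform-continuity step and in the Riemann-sum step truly negligible, completing the proof.
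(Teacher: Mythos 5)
This lemma is quoted in the paper from \cite{Ptashnyk15} and no proof is given here, so there is nothing internal to compare against; judged on its own, your sketch follows exactly the route one expects (and that \cite{Ptashnyk15} takes): decompose over the macro-cells $\Omega_n^\ve$, rescale each periodicity cell, use $Y$-periodicity of $\widetilde\psi(x,\cdot)$, replace the slow variable by $x_n^\ve$ via uniform continuity, and read off a Riemann sum, with the boundary layers $\Xi_n^\ve\setminus\hat\Xi_n^\ve$ and $\Lambda_\ve^\ast$ controlled separately. The conclusion and the structure are correct. One bookkeeping point should be fixed, though, because as written two errors cancel each other: the surface change of variables $y\mapsto D_{x_n^\ve}y$ carries the full surface Jacobian $\sqrt{g_{x_n^\ve}}=|\det D_{x_n^\ve}|\,|D_{x_n^\ve}^{-T}n|$, so the per-$\xi$ integral equals $\ve^{d}\int_{\widetilde\Gamma_{x_n^\ve}}|\psi(\cdot,\tilde y)|^p\,d\sigma_{\tilde y}$ with \emph{no} factor $|\det D_{x_n^\ve}|^{-1}$; the factor $1/|\widetilde Y_x|$ in the limit comes instead from the cell count, since each cell has volume $\ve^d|\widetilde Y_{x_n^\ve}|$ and hence $|\hat\Xi_n^\ve|\,\ve^d\approx|\Omega_n^\ve|/|\widetilde Y_{x_n^\ve}|$, not $|\Omega_n^\ve|$. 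Your two statements are each off by exactly $|\widetilde Y_{x_n^\ve}|$ in opposite directions, which is why your final Riemann sum is nevertheless the right one (and for the specific $D=R\,W$ of this paper $\det D\equiv 1$, so the discrepancy is invisible here, but the lemma is stated for general $D$). Finally, the boundary-layer contribution works out to $O(\ve^{1-r})$ rather than $O(\ve^{r})+O(\ve)$ --- there are $O(\ve^{(r-1)(d-1)})$ bad cells per $\Omega_n^\ve$, each weighted by $\ve^d$, times $N_\ve=O(\ve^{-rd})$ macro-cells --- but this is still $o(1)$ for $r\in(0,1)$, so the conclusion stands.
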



\begin{definition}[\cite{Ptashnyk15}]\label{l-p-unf-oper} For any  Lebesgue-measurable on $\Omega$  function $\psi$
the locally periodic unfolding operator (l-p unfolding operator)
$\mathcal T_{\mathcal L}^\ve : \Omega \to  \Omega \times Y$ is defined as 
\begin{eqnarray*}
\mathcal T^\ve_{\mathcal L} (\psi) (x,y) = & \sum\limits_{n=1}^{N_\ve}
\psi\big( \ve D_{x_n^\ve} \big[{D^{-1}_{x_n^\ve} x} /\ve \big]_{Y} +  \ve D_{x_n^\ve}  y \big) \chi_{\hat \Omega_{n}^\ve}(x)  
\end{eqnarray*}
for  $x \in \Omega$ and     $y \in Y$.
\end{definition}
The definition implies that  $\mathcal T_{\mathcal L}^\ve (\phi)$ is  Lebesgue-measurable on $\Omega\times Y$ and is zero for $x \in \Lambda^\ve$, where $\Lambda^\ve= \bigcup_{n=1}^{N_\ve} (\Omega_n^\ve \setminus \hat \Omega_{n}^\ve)\cap \Omega$.

%

\begin{definition}[\cite{Ptashnyk15}]\label{b-l-p-unf-oper} For any  Lebesgue-measurable on $\widetilde \Gamma^\ve$  function $\psi$
the l-p boundary unfolding operator 
$\mathcal T_{\mathcal L}^{b,\ve} : \Omega \to  \Omega \times \Gamma$ is defined as 
\begin{eqnarray*}
\mathcal T^{b,\ve}_{\mathcal L} (\psi) (x,y) = & \sum\limits_{n=1}^{N_\ve}
 \, \psi\big( \ve D_{x_n^\ve} \big[ {D^{-1}_{x_n^\ve} x}/ \ve\big]_{Y} +  \ve D_{x_n^\ve}\widetilde K_{x_n^\ve} y \big) \chi_{\hat \Omega_{n}^\ve}(x)
\end{eqnarray*}
for  $x \in \Omega$ and  $y \in \Gamma$.
\end{definition}

 There definitions give  a generalization of the periodic boundary unfolding operator introduced in~\cite{Cioranescu_2006,Cioranescu:2012} to locally-periodic microstructures. 
 

\begin{theorem}[\cite{Ptashnyk15}]\label{prop_conver_1}
For a sequence $\{w^\ve \}\subset L^p(\Omega)$, with $p \in (1, \infty)$,  satisfying  
$$
\| w^\ve \|_{L^p(\Omega)} + 
\ve \|\nabla w^\ve \|_{L^p(\Omega)} \leq C$$  
there exist a subsequence (denoted again by $\{w^\ve\}$)
and  $w \in L^p(\Omega; W^{1,p}_{\text{per}}(\widetilde Y_x))$ such that
\begin{eqnarray*}
\begin{aligned}
 \T_{\mL}^\ve(w^\ve) & \; \rightharpoonup && w(\cdot, D_x \cdot) \quad\hspace{2.1 cm } && \text{ weakly in } \, L^p(\Omega; W^{1,p}(Y)), \\ 
 \ve  \T_{\mL}^\ve(\nabla w^\ve) &\; \rightharpoonup  && D_x^{-T}\nabla_y w(\cdot, D_x \cdot)  \quad && \text{ weakly in } \, L^p(\Omega\times Y). 
\end{aligned}
\end{eqnarray*} 
\end{theorem}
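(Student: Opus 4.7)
The plan is to mimic the compactness proof for the standard periodic unfolding operator (Cioranescu--Damlamian--Griso), but with the macroscopic transformation matrix $D_x$ encoding the spatially-varying periodicity.

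First I would record the basic $L^p$-continuity of $\T_\mL^\ve$. Splitting $\Omega\times Y$ as $\bigcup_n \hat\Omega_n^\ve \times Y$ and performing on each patch the change of variable $\tilde x = \ve D_{x_n^\ve}[D_{x_n^\ve}^{-1}x/\ve]_Y + \ve D_{x_n^\ve} y$, whose Jacobian $\ve^{3}|\det D_{x_n^\ve}|$ is uniformly bounded from above and below by the hypotheses on $D$, I would obtain
$$\|\T_\mL^\ve(\psi)\|_{L^p(\Omega\times Y)} \leq C\,\|\psi\|_{L^p(\Omega)} \quad \text{for every } \psi \in L^p(\Omega),$$
with $C$ independent of $\ve$. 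Applied to $\psi = w^\ve$ and $\psi = \nabla w^\ve$, the hypothesis yields uniform bounds on $\T_\mL^\ve(w^\ve)$ and on $\ve\,\T_\mL^\ve(\nabla w^\ve)$ in $L^p(\Omega\times Y)$. Reflexivity then provides a subsequence along which $\T_\mL^\ve(w^\ve)\rightharpoonup \hat w$ and $\ve\,\T_\mL^\ve(\nabla w^\ve)\rightharpoonup \eta$ weakly in $L^p(\Omega\times Y)$.

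Second, I would establish and pass to the limit in the cell-wise chain rule. Differentiating the definition of $\T_\mL^\ve$ with respect to $y$ on each $\hat\Omega_n^\ve$ gives
$$\nabla_y \T_\mL^\ve(w^\ve)(x,y) = \ve\, D_{x_n^\ve}^{T}\,\T_\mL^\ve(\nabla w^\ve)(x,y).$$
Because $D$ is Lipschitz and $\operatorname{diam}(\Omega_n^\ve)=O(\ve^r)$, the piecewise constant matrix $M^\ve := \sum_n D_{x_n^\ve}^{T}\chi_{\hat\Omega_n^\ve}$ converges uniformly on compact subsets of $\Omega$ to $D_x^{T}$. Distributional passage to the limit therefore yields $\nabla_y\hat w = D_x^{T}\eta$, so that $\hat w\in L^p(\Omega; W^{1,p}(Y))$ and the convergence $\T_\mL^\ve(w^\ve)\rightharpoonup \hat w$ in fact holds weakly in $L^p(\Omega; W^{1,p}(Y))$, with $\eta = D_x^{-T}\nabla_y\hat w$. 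The third step is to verify that $\hat w$ is $Y$-periodic in $y$. On each patch one has the shift identity $\T_\mL^\ve(w^\ve)(x,y+e_i) = \T_\mL^\ve(w^\ve)(x + \ve D_{x_n^\ve} e_i,\, y)$ provided both sampling points remain in a common cell. Testing against $\varphi \in C_c^\infty(\Omega\times Y)$ and using the uniform $L^p$ bound on $w^\ve$ to absorb the small set where sampling crosses a patch interface, the vanishing shift $\ve D_{x_n^\ve} e_i\to 0$ forces $\hat w(\cdot,\cdot+e_i) = \hat w(\cdot,\cdot)$ in $L^p$. Defining $w(x,\tilde y):=\hat w(x, D_x^{-1}\tilde y)$ for $\tilde y\in \widetilde Y_x$, the linear change of coordinates gives $w\in L^p(\Omega; W^{1,p}_{\text{per}}(\widetilde Y_x))$ together with the identification $\hat w(x,y) = w(x, D_x y)$, and the chain rule $\nabla_y[w(x,D_x y)] = D_x^{T}(\nabla_{\tilde y}w)(x,D_x y)$ combined with $\eta = D_x^{-T}\nabla_y\hat w$ produces the stated formula for the weak limit of $\ve\,\T_\mL^\ve(\nabla w^\ve)$.

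The main technical obstacle is the passage to the limit in the matrix-valued identity of the second step: the piecewise constant coefficient $M^\ve$ multiplies a sequence converging only weakly, and the set $\Lambda^\ve = \bigcup_n(\Omega_n^\ve\setminus\hat\Omega_n^\ve)\cap\Omega$ on which $\T_\mL^\ve$ is defined to be zero must be shown to be negligible in the limit. Both issues are resolved by the scale-separation $0<r<1$ together with the Lipschitz regularity of $D$, which simultaneously guarantee uniform convergence $M^\ve\to D_x^{T}$ on compacta, vanishing measure of $\Lambda^\ve$, and the smallness $|\ve D_{x_n^\ve} e_i|=O(\ve)\ll \ve^r$ of patch-to-patch mismatches required in the periodicity step.
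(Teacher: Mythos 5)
This theorem is stated in the Appendix as a recalled result from \cite{Ptashnyk15} and is not proved in the present paper, so there is no in-paper proof to compare against; your argument, however, is the standard unfolding-compactness scheme correctly adapted to the locally-periodic setting (cellwise change of variables for the uniform $L^p$ bound, the identity $\nabla_y \mathcal T^\ve_{\mathcal L}(w^\ve)=\ve D_{x_n^\ve}^{T}\mathcal T^\ve_{\mathcal L}(\nabla w^\ve)$ with the frozen matrices converging uniformly to $D_x^{T}$, and the shift argument for $Y$-periodicity up to the negligible sets $\Lambda^\ve$ and patch interfaces), which is precisely the route taken in the cited reference. I see no gap; your reading of $D_x^{-T}\nabla_y w(\cdot,D_x\cdot)$ as $D_x^{-T}$ applied to the $y$-gradient of the composite $y\mapsto w(x,D_xy)$ is the consistent one and your identification of the limit matches it.
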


\begin{theorem}[\cite{Ptashnyk15}]\label{theorem_cover_grad}
For a sequence  $\{w^\ve\}\subset W^{1,p}(\Omega)$, with $p\in (1, \infty)$,  that converges weakly to $w$ in $W^{1,p}(\Omega)$, there exist a subsequence (denoted again by $\{w^\ve\}$) and a function $w_1\in L^p(\Omega; W^{1,p}_{\text{per}}(\widetilde Y_x))$ such that 
\begin{eqnarray*}
\begin{aligned}
 \mathcal T_{\mathcal L}^{\ve}(w^\ve) &\;  \rightharpoonup  &&  w    && \text{  weakly in }  L^p(\Omega; W^{1,p}(Y)), \\
 \mathcal T_{\mathcal L}^{\ve}(\nabla w^\ve)(\cdot, \cdot) &  \rightharpoonup  && \nabla_x w(\cdot) +  D_x^{-T} \nabla_y w_1(\cdot, D_x \cdot)  && \text{ weakly in } L^p(\Omega\times Y).
\end{aligned}
\end{eqnarray*}
\end{theorem}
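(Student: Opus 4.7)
Proof proposal.
The plan is to build on Theorem~\ref{prop_conver_1} for the first convergence and to adapt the Cioranescu--Damlamian--Griso corrector construction to the locally-periodic setting for the second. The single key input from Definition~\ref{l-p-unf-oper} is the pointwise identity $\nabla_y \mathcal T^\ve_{\mathcal L}(w^\ve)(x,y) = \ve D_{x_n^\ve}^T \mathcal T^\ve_{\mathcal L}(\nabla w^\ve)(x,y)$ for $x\in \hat\Omega_n^\ve$, obtained by differentiating $\psi(\ve D_{x_n^\ve}[\cdot]_Y + \ve D_{x_n^\ve} y)$ in $y$; together with the approximate $L^p$-isometry of $\mathcal T^\ve_{\mathcal L}$ (valid up to the vanishing boundary layer $\Lambda^\ve$), this yields $\|\mathcal T^\ve_{\mathcal L}(w^\ve)\|_{L^p(\Omega\times Y)} + \|\mathcal T^\ve_{\mathcal L}(\nabla w^\ve)\|_{L^p(\Omega\times Y)} \leq C$.

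The first convergence will follow almost directly from Theorem~\ref{prop_conver_1}: since $\|w^\ve\|_{L^p(\Omega)} + \ve \|\nabla w^\ve\|_{L^p(\Omega)} \leq C$, that theorem produces some $W \in L^p(\Omega; W^{1,p}_{\text{per}}(\widetilde Y_x))$ with $\mathcal T^\ve_{\mathcal L}(w^\ve) \rightharpoonup W(\cdot, D_x \cdot)$ weakly in $L^p(\Omega; W^{1,p}(Y))$ and $\ve \mathcal T^\ve_{\mathcal L}(\nabla w^\ve) \rightharpoonup D_x^{-T} \nabla_y W(\cdot, D_x \cdot)$ weakly in $L^p(\Omega\times Y)$. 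The stronger $W^{1,p}$-bound forces the $L^p$-bounded sequence $\mathcal T^\ve_{\mathcal L}(\nabla w^\ve)$ to satisfy $\ve \mathcal T^\ve_{\mathcal L}(\nabla w^\ve) \to 0$ strongly in $L^p$, so $D_x^{-T} \nabla_y W = 0$ and $W$ is $y$-independent. Rellich--Kondrachov gives $w^\ve \to w$ strongly in $L^p(\Omega)$, and the standard unfolding-integration identity transfers this to $\mathcal T^\ve_{\mathcal L}(w^\ve) \to w$ strongly in $L^p(\Omega\times Y)$, identifying $W = w$ and establishing $\mathcal T^\ve_{\mathcal L}(w^\ve) \rightharpoonup w$ weakly in $L^p(\Omega; W^{1,p}(Y))$.

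For the corrector, extract $\mathcal T^\ve_{\mathcal L}(\nabla w^\ve) \rightharpoonup V$ weakly along a further subsequence and set
\[
\widehat w^\ve(x,y) = \frac{1}{\ve}\Bigl[\mathcal T^\ve_{\mathcal L}(w^\ve)(x,y) - \mathcal M_Y\bigl(\mathcal T^\ve_{\mathcal L}(w^\ve)\bigr)(x)\Bigr],
\]
where $\mathcal M_Y$ is the mean over $Y$. The key identity gives $\nabla_y \widehat w^\ve = D_{x_n^\ve}^T \mathcal T^\ve_{\mathcal L}(\nabla w^\ve)$ piecewise, hence bounded in $L^p(\Omega\times Y)$; Poincaré--Wirtinger in $y$ then yields $\|\widehat w^\ve\|_{L^p(\Omega; W^{1,p}(Y))} \leq C$ and produces a weak limit $\widehat w \in L^p(\Omega; W^{1,p}(Y))$ with $\mathcal M_Y(\widehat w) = 0$. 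Because $D$ is Lipschitz and the partition has mesh $\ve^r \to 0$, the piecewise-constant field $D_{x_n^\ve}^T$ converges uniformly to $D_x^T$, so passing to the limit gives $\nabla_y \widehat w(x,y) = D_x^T V(x,y)$. One then decomposes $\widehat w(x,y) = y\cdot D_x^T \nabla_x w(x) + w_1(x, D_x y)$ modulo its $Y$-mean, with $w_1(x,\tilde y)$ being $\widetilde Y_x$-periodic in $\tilde y$; the linear-in-$y$ tilt arises because finite differences of $\widehat w^\ve$ across adjacent unfolded cells approximate $\ve^{-1}$ times finite differences of $w^\ve$ between cell centres separated by $\ve D_{x_n^\ve} e_i$, whose limits are $D_x e_i \cdot \nabla_x w(x)$. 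Differentiation of the decomposition yields $V = \nabla_x w + D_x^{-T} \nabla_y w_1(\cdot, D_x \cdot)$, as claimed.

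The main obstacle is establishing the $\widetilde Y_x$-periodicity of the residual $w_1$. Neighbouring microcells of the locally-periodic microstructure may belong to different elements $\Omega_n^\ve, \Omega_m^\ve$ of the partition, whose transformation matrices $D_{x_n^\ve}, D_{x_m^\ve}$ differ by $O(\ve^r)$; one must carefully estimate the induced cell-boundary jumps of $\widehat w^\ve$ and show that they vanish in the weak $L^p$ limit. This is precisely the point at which the regularity of $D$ and the choice of mesh exponent $r$ interact, and it can be handled by a partition-comparison argument of the same kind as the one used earlier in the excerpt to estimate $\|\chi_{\Omega_\ve^\ast}- \chi_{\widetilde\Omega_\ve^\ast}\|_{L^2(\Omega)}$ when approximating the non-periodic microstructure by the locally-periodic one.
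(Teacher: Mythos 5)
The paper does not prove this theorem itself: it is quoted verbatim from \cite{Ptashnyk15} in the Appendix as a known compactness result for the l-p unfolding operator, so there is no in-paper proof to compare against. Your reconstruction follows essentially the route that reference takes, namely the Cioranescu--Damlamian--Griso corrector argument transported to the locally-periodic setting: the identification of the $y$-independent limit $w$ via Theorem~\ref{prop_conver_1} and Rellich is correct, the scaling identity $\nabla_y \mathcal T^\ve_{\mathcal L}(w^\ve)=\ve D_{x_n^\ve}^T\mathcal T^\ve_{\mathcal L}(\nabla w^\ve)$ is the right key input, and the Poincar\'e--Wirtinger bound on $\widehat w^\ve$ together with the linear tilt $y\cdot D_x^T\nabla_x w$ gives the claimed form of the limit of $\mathcal T^\ve_{\mathcal L}(\nabla w^\ve)$.

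The one step you name but do not execute --- the $\widetilde Y_x$-periodicity of $w_1$ --- is genuinely the crux, and your proposed fix is pointed at the wrong estimate. The comparison of characteristic functions $\|\chi_{\Omega_\ve^\ast}-\chi_{\widetilde\Omega_\ve^\ast}\|_{L^2}$ concerns the approximation of the non-periodic geometry by the locally-periodic one; it plays no role here, since the theorem is already stated for the locally-periodic unfolding operator. What actually closes the gap is simpler: inside each $\hat\Omega_n^\ve$ the microstructure is exactly $\ve D_{x_n^\ve}Y$-periodic, so the usual trace-matching argument (the value of $\widehat w^\ve$ on the face $y_i=1$ of the cell indexed by $\xi$ equals its value on the face $y_i=0$ of the cell $\xi+e_i$, up to the linear tilt) goes through verbatim whenever both cells lie in the same $\hat\Omega_n^\ve$. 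The cells adjacent to a partition boundary, where $D_{x_n^\ve}$ and $D_{x_m^\ve}$ differ by $O(\ve^r)$, occupy a set of measure $O(\ve^{1-r})$, which vanishes in the weak limit; no quantitative jump estimate is needed. With that substitution your argument is complete and coincides in substance with the proof in \cite{Ptashnyk15}.
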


 \begin{theorem}[\cite{Ptashnyk15}]\label{conv_locally_period_b}
For a sequence $\{ w^\ve \} \subset L^p(\widetilde \Gamma^\ve)$, with $p \in (1, \infty)$,  satisfying  $$\ve \| w^\ve\|^p_{L^p(\widetilde \Gamma^\ve)}\leq C$$ there exist a subsequence  (denoted  again by $\{ w^\ve \}$)  and  $w \in L^p(\Omega; L^p(\widetilde \Gamma_x))$ such that 
$$
w^\ve \to w \quad \text{locally periodic two-scale (l-t-s). } 
$$
\end{theorem}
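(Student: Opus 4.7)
The plan is to adapt the Nguetseng-style two-scale extraction argument to the oscillating locally-periodic surface $\widetilde \Gamma^\ve$, using the preceding Lemma~\ref{Convergence_1_boundary} on convergence of boundary integrals of $\mathcal L^\ve \psi$ as the key analytic ingredient. Let $q\in(1,\infty)$ be conjugate to $p$, set $\mathcal X = L^q(\Omega; C_{\text{per}}(\widetilde Y_x))$, and for each $\ve>0$ define the linear functional
\[
F_\ve(\psi) \,=\, \ve \int_{\widetilde \Gamma^\ve} w^\ve(x)\, \mathcal L^\ve \psi(x)\, d\sigma_x, \qquad \psi \in \mathcal X.
\]
By H\"older's inequality on $\widetilde \Gamma^\ve$,
\[
|F_\ve(\psi)| \,\le\, \bigl(\ve\, \|w^\ve\|_{L^p(\widetilde \Gamma^\ve)}^p\bigr)^{1/p}\bigl(\ve\, \|\mathcal L^\ve \psi\|_{L^q(\widetilde \Gamma^\ve)}^q\bigr)^{1/q},
\]
and the hypothesis together with Lemma~\ref{Convergence_1_boundary} applied to $|\psi|^q$ yields
\[
\limsup_{\ve\to 0} |F_\ve(\psi)| \,\le\, C^{1/p}\Bigl(\int_\Omega \frac{1}{|\widetilde Y_x|}\int_{\widetilde \Gamma_x} |\psi(x,y)|^q\, d\sigma_y\,dx\Bigr)^{1/q}.
\]
Hence the sequence $\{F_\ve\}$ is uniformly bounded in the norm of the weighted space $\mathcal Y := L^q\bigl(\Omega; L^q(\widetilde \Gamma_x; d\sigma_y/|\widetilde Y_x|)\bigr)$.

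Since $\mathcal X$ is separable, a diagonal extraction produces a subsequence, still denoted $\{F_\ve\}$, along which $F_\ve(\psi)$ converges for every $\psi$ in a countable dense subset of $\mathcal X$, and by the uniform bound above for every $\psi \in \mathcal X$. The limit $F$ extends by continuity/density to a bounded linear functional on $\mathcal Y$. To identify $F$ with a function, I would pull back the varying surface integrals to the fixed $\Gamma$ through the Lipschitz change of variables $y = D_x \widetilde K_x \hat y$, $\hat y \in \Gamma$, whose surface Jacobian is bounded above and below uniformly in $x \in \overline\Omega$ by the regularity of $D$ and $\widetilde K$ and the bounds $0 < \rho_0 \le |\det \widetilde K_x| \le \rho_1$. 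After pull-back the test-function space becomes $L^q(\Omega; C(\Gamma))$, whose dual is $L^p(\Omega; L^p(\Gamma))$ by standard Bochner duality. Riesz representation then provides $\tilde w \in L^p(\Omega; L^p(\Gamma))$, and transporting $\tilde w$ forward by the same change of variables yields the desired $w \in L^p(\Omega; L^p(\widetilde \Gamma_x))$ together with the identity in Definition~\ref{l-t-s_boundary}.

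The main obstacle is the representation step on the $x$-dependent fibre $\widetilde \Gamma_x$: one must verify that $\hat y \mapsto D_x \widetilde K_x \hat y$ induces a genuine measurable isomorphism $L^p(\Omega; L^p(\widetilde \Gamma_x)) \cong L^p(\Omega; L^p(\Gamma))$ with equivalent norms, so that duality on the fixed-fibre space transfers faithfully to the moving-fibre side and that the limit $w$ is well-defined as an element of the fibred Bochner space. This rests on Lipschitz regularity of $D$, $\widetilde K$, uniform non-degeneracy of their determinants, and $\widetilde Y_x$-periodicity, to guarantee joint $(x,\hat y)$-measurability of the surface-Jacobian factor and compatibility of periodic extensions across cells. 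A secondary technical point is that the functions $\{\mathcal L^\ve \psi\, \big|_{\widetilde \Gamma^\ve} : \psi \in \mathcal X\}$ provide a rich enough class of probes as $\ve\to 0$; this is handled by smooth $\widetilde Y_x$-periodic approximation and a further appeal to Lemma~\ref{Convergence_1_boundary}, closing the argument.
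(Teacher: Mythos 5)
Your proposal follows the standard Nguetseng-type compactness argument (uniform asymptotic bound on the functionals $F_\ve$ via H\"older and Lemma~\ref{Convergence_1_boundary}, extraction on a separable test space, then Riesz representation after pulling the moving fibres $\widetilde\Gamma_x$ back to the fixed reference surface $\Gamma$ with uniformly non-degenerate Jacobians), which is precisely the route of \cite{Ptashnyk15}; the present paper only quotes the theorem and contains no proof of it. The one point to repair: since $\widetilde\Gamma^\ve$ has Lebesgue measure zero in $\Omega$, the trace of $\mathcal L^\ve\psi$ on it is not defined for $\psi$ merely in $L^q(\Omega; C_{\text{per}}(\widetilde Y_x))$, so you should take $\mathcal X = C(\overline\Omega; C_{\text{per}}(\widetilde Y_x))$ as in Definition~\ref{l-t-s_boundary} and Lemma~\ref{Convergence_1_boundary}; this space is still separable and its restrictions are still dense in your weighted space $\mathcal Y$, so the rest of the argument is unchanged.
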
 

\begin{theorem}[\cite{Ptashnyk15}]\label{weak_two_scale_b}
Let $\{ w^\ve\} \subset L^p(\widetilde \Gamma^\ve)$ with $\ve \| w^\ve \|^p_{L^p(\widetilde \Gamma^\ve)} \leq C$, where $p \in (1, \infty)$.
The following assertions are equivalent 
\begin{equation*}
\begin{aligned}
(i) &\qquad \qquad 
w^\ve \to w  \qquad  \qquad  &&  \text{ l-t-s}, \qquad  w \in L^p(\Omega; L^p(\widetilde \Gamma_x)). 
\\
(ii) & \quad \T_{\mL}^{b, \ve} (w^\ve) \rightharpoonup  w(\cdot, D_x\widetilde K_x \cdot) \quad &&  \text{weakly  in } L^p(\Omega\times \Gamma).
\end{aligned}
\end{equation*}
\end{theorem}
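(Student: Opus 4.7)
The plan is to establish equivalence through a single fundamental identity which relates boundary integrals on the oscillating surface $\widetilde\Gamma^\ve$ to integrals on the fixed product space $\Omega\times\Gamma$ via the l-p boundary unfolding operator, and then read each implication off from this identity. Both (i) and (ii) are duality statements against test functions, so the whole proof reduces to showing that the two test-pairings differ by $o(1)$ up to an explicit continuous change of variables $y \mapsto D_x\widetilde K_x y$.

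First I would prove the fundamental identity: for $u^\ve \in L^p(\widetilde\Gamma^\ve)$ with $\ve\|u^\ve\|_{L^p(\widetilde\Gamma^\ve)}^p \leq C$ and $\psi \in C(\overline\Omega; C_{\text{per}}(\widetilde Y_x))$ with associated $\widetilde\psi \in C(\overline\Omega; C_{\text{per}}(Y))$,
\begin{equation*}
\ve\int_{\widetilde\Gamma^\ve} u^\ve(x)\,\mL^\ve\psi(x)\,d\sigma_x
= \int_\Omega \sum_{n=1}^{N_\ve}\frac{\chi_{\hat\Omega_n^\ve}(x)}{|\widetilde Y_{x_n^\ve}|}\int_\Gamma \T^{b,\ve}_\mL(u^\ve)(x,y)\,\widetilde\psi\bigl(x_n^\ve,\widetilde K_{x_n^\ve}y\bigr)\,J_{x_n^\ve}(y)\,d\sigma_y\,dx + R_\ve,
\end{equation*}
where $J_x(y)$ is the surface Jacobian of the map $y\mapsto D_x\widetilde K_x y$ from $\Gamma$ to $\widetilde\Gamma_x$, and $R_\ve\to 0$. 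The identity is obtained cell by cell: parametrize each face $\tilde x_n^\ve + \ve D_{x_n^\ve}(\widetilde K_{x_n^\ve}\Gamma + \xi)$ by $y\in\Gamma$ through $x = \tilde x_n^\ve + \ve D_{x_n^\ve}(\widetilde K_{x_n^\ve}y + \xi)$, so $d\sigma_x = \ve^2 J_{x_n^\ve}(y)\,d\sigma_y$; multiply and divide by $\ve^3|\widetilde Y_{x_n^\ve}| = |\ve D_{x_n^\ve}(Y+\xi)|$ to recognise the latter as $|\hat\Omega_n^\ve\cap (\ve D_{x_n^\ve}(Y+\xi) + \tilde x_n^\ve)|$ and rewrite the discrete sum over $\xi\in\hat\Xi_n^\ve$ as the integral over $\hat\Omega_n^\ve$ of $\T^{b,\ve}_\mL(u^\ve)(x,y)\widetilde\psi(x_n^\ve,\widetilde K_{x_n^\ve}y)$. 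The remainder $R_\ve$ collects the faces contained in $\Lambda_\ve$ (not covered by full unfolded cells); its size is $\ve\|u^\ve\|_{L^p(\widetilde\Gamma^\ve\cap\Lambda_\ve)}\|\mL^\ve\psi\|_{L^q(\widetilde\Gamma^\ve)} \leq C|\Lambda_\ve|^{1/q}$, which vanishes because $|\Lambda_\ve|\to 0$ and Lemma \ref{Convergence_1_boundary} bounds the $L^q(\widetilde\Gamma^\ve)$-norm of $\mL^\ve\psi$.

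Second, I would pass to the limit in the right-hand side of the identity. The piecewise-constant coefficient
\begin{equation*}
F^\ve(x,y) := \sum_{n=1}^{N_\ve}\frac{\chi_{\hat\Omega_n^\ve}(x)}{|\widetilde Y_{x_n^\ve}|}\,\widetilde\psi(x_n^\ve,\widetilde K_{x_n^\ve}y)\,J_{x_n^\ve}(y)
\end{equation*}
converges strongly in $L^q(\Omega\times\Gamma)$ to $F(x,y) := |\widetilde Y_x|^{-1}\widetilde\psi(x,\widetilde K_x y)J_x(y)$: uniform continuity of $\widetilde\psi$, Lipschitz regularity of $\widetilde K$, $D$ and $x\mapsto|\widetilde Y_x|$, together with $|x - x_n^\ve|\leq C\ve^r$ for $x\in\Omega_n^\ve$, yield uniform convergence on compact subsets of $\Omega$, while $|\Lambda_\ve|\to 0$ handles the boundary strip. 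Now both implications follow. For (i)$\Rightarrow$(ii): given the uniform bound $\|\T^{b,\ve}_\mL(w^\ve)\|_{L^p(\Omega\times\Gamma)}\leq C$ (which follows from the cell-wise change of variables), weak compactness yields a subsequential weak limit $W$; applying the identity with $\psi$ arbitrary gives
\begin{equation*}
\int_\Omega\int_\Gamma W(x,y)F(x,y)\,d\sigma_y\,dx = \int_\Omega\frac{1}{|\widetilde Y_x|}\int_{\widetilde\Gamma_x}w(x,z)\psi(x,z)\,d\sigma_z\,dx,
\end{equation*}
and the change of variables $z = D_x\widetilde K_x y$ on $\widetilde\Gamma_x$ (absorbing $J_x$) identifies $W(x,y) = w(x,D_x\widetilde K_x y)$, independent of subsequence. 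For (ii)$\Rightarrow$(i), the identity converts the left-hand side of Definition~\ref{l-t-s_boundary} into a pairing $\int_{\Omega\times\Gamma}\T^{b,\ve}_\mL(w^\ve)F^\ve$, and the product of a weakly convergent sequence with a strongly convergent one passes to the limit, giving the integral over $\widetilde\Gamma_x$ after the inverse change of variables; density of $\{\mL^\ve\psi:\psi\in C(\overline\Omega;C_{\text{per}}(\widetilde Y_x))\}$ in the admissible test class (already used to define l-t-s convergence) completes the argument.

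The main obstacle is the bookkeeping of the surface Jacobian $J_x(y)$ and the consistency of the two representations of the limit: on the l-t-s side the limit is a function on the $x$-dependent surface $\widetilde\Gamma_x$, whereas on the unfolded side it lives on the fixed reference surface $\Gamma$, and the two are related by the nonconstant nonlinear map $D_x\widetilde K_x$. The key technical point is checking that this Jacobian appears symmetrically in both directions so that it cancels when one passes from integrals against $\widetilde\psi(x,\widetilde K_x y)$ on $\Gamma$ to integrals against $\psi(x,z)$ on $\widetilde\Gamma_x$, which requires the Lipschitz regularity of $\widetilde K$ and $D$ together with the uniform non-degeneracy $|\det\widetilde K_x|, |\det D_x|\geq c>0$ asserted in Section \ref{Application}. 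Handling the $\Lambda_\ve$ remainder uniformly in $p\in(1,\infty)$ and justifying the weak compactness of $\{\T^{b,\ve}_\mL(w^\ve)\}$ directly from $\ve\|w^\ve\|_{L^p(\widetilde\Gamma^\ve)}^p\leq C$ via the cell-wise change of variables completes the preparation.
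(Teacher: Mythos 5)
This theorem is quoted from \cite{Ptashnyk15} and the paper contains no proof of it, so there is nothing internal to compare against; judged on its own, your argument is correct and is essentially the standard route taken in the cited reference (and in the periodic analogue of Cioranescu--Donato--Zaki): an exact cell-by-cell unfolding identity for surface integrals with the Jacobian of $y\mapsto D_x\widetilde K_x y$ and the factor $|\widetilde Y_x|^{-1}$, a vanishing $\Lambda_\ve$ remainder, strong $L^q$ convergence of the piecewise-constant coefficient, and weak--strong pairing in both directions. The only points to tighten are minor: the Hölder bookkeeping for $R_\ve$ should carry the exponents $1/p$ and $1/q$ on the two surface norms, and the identification of the subsequential weak limit in (i)$\Rightarrow$(ii) implicitly uses that the test family $\widetilde\psi(x,\widetilde K_x\cdot)J_x(\cdot)/|\widetilde Y_x|$ separates points in $L^q(\Omega\times\Gamma)$, which deserves a sentence.
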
 

 Theorems~\ref{conv_locally_period_b}~and~\ref{weak_two_scale_b} imply that  for   $\{ w^\ve\} \subset L^p(\widetilde\Gamma^\ve)$ with $\ve \| w^\ve \|^p_{L^p(\widetilde\Gamma^\ve)} \leq C$ we have the weak convergence of $\{\T_{\mL}^{b, \ve} (w^\ve)\}$  in $L^p(\Omega\times \Gamma)$, where $p \in (1, \infty)$.


\end{document}